\theoremstyle{plain}
\newtheorem{thm}{Theorem}[section]
\newtheorem{cor}[thm]{Corollary}
\newtheorem{pro}[thm]{Proposition}
\newtheorem{lem}[thm]{Lemma}
\newtheorem{proposition-principale}[thm]{Proposition principale}
\newtheorem{theoalph}{Theorem}
\newtheorem{coralph}[theoalph]{Corollary}
\theoremstyle{definition}
\numberwithin{equation}{section}       
\begin{document}

\setlength{\baselineskip}{0.54cm}         
\title[Dynamical number of base-points of non base-wandering Jonqui\`eres twists]{Dynamical number of base-points of \\
non base-wandering Jonqui\`eres twists}
\date{}

\author{Julie D\'eserti}\thanks{The 
 author was partially supported by the ANR grant Fatou ANR-$17$-CE$40$-$0002$-$01$ and the ANR grant Foliage ANR-$16$-CE$40$-$0008$-$01$.}
\address{Universit\'e d'Orl\'eans, Institut Denis Poisson, route de Chartres, $45067$ Orl\'eans Cedex $2$, France}
\email{deserti@math.cnrs.fr}

\subjclass[2010]{}

\keywords{}

\begin{abstract} 
We give some properties of the dynamical number 
of base-points of birational self-maps of 
$\mathbb{P}^2_\mathbb{C}$. 

In particular we give a formula to determine the 
dynamical number of base-points of non 
base-wandering Jonqui\`eres twists. 
\end{abstract}

\maketitle

\section{Introduction}

The \textsl{plane Cremona group} $\mathrm{Bir}(\mathbb{P}^2_\mathbb{C})$ 
is the group of birational maps of the complex projective
plane $\mathbb{P}^2_\mathbb{C}$. It is isomorphic to the 
group of $\mathbb{C}$-algebra automorphisms of $\mathbb{C}(X,Y)$, 
the function field of $\mathbb{P}^2_\mathbb{C}$. Using a 
system of homogeneous coordinates $(x:y:z)$ a birational map
$f\in\mathrm{Bir}(\mathbb{P}^2_\mathbb{C})$ can be written as
\[
(x:y:z)\dashrightarrow(P_0(x,y,z):P_1(x,y,z):P_2(x,y,z))
\]
where $P_0$, $P_1$ and $P_2$ are homogeneous polynomials
of the same degree without common factor. This degree does
not depend on the system of homogeneous coordinates. We call it 
the \textsl{degree} of $f$ and denote it by $\deg(f)$. Geometrically
it is the degree of the pull-back by~$f$ of a general 
projective line. Birational maps of degree $1$ are homographies
and form the group 
$\mathrm{Aut}(\mathbb{P}^2_\mathbb{C})=\mathrm{PGL}(3,\mathbb{C})$
of automorphisms of the projective plane.

\medskip

\begin{itemize}
\item[$\diamond$] \textbf{Four types of elements}. 

The elements $f\in\mathrm{Bir}(\mathbb{P}^2_\mathbb{C})$
can be classified into exactly one of the four following 
types according to the growth of the sequence 
$(\deg(f^k))_{k\in\mathbb{N}}$ (\emph{see} 
\cite{DillerFavre, BlancDeserti}):
\begin{enumerate}
\item The sequence $(\deg(f^k))_{k\in\mathbb{N}}$ is 
bounded, $f$ is either of finite order or conjugate
to an automorphism of $\mathbb{P}^2_\mathbb{C}$;
we say that $f$ is an \textsl{elliptic element}.

\item The sequence $(\deg(f^k))_{k\in\mathbb{N}}$
grows linearly, $f$ preserves a unique pencil of 
rational curves and $f$ is not conjugate to an 
automorphism of any rational projective surface; 
we call $f$ a \textsl{Jonqui\`eres twist}.

\item The sequence $(\deg(f^k))_{k\in\mathbb{N}}$ 
grows quadratically, $f$ is conjugate to an 
automorphism of a rational projective surface 
preserving a unique elliptic fibration; we 
call $f$ a \textsl{Halphen twist}.

\item The sequence $(\deg(f^k))_{k\in\mathbb{N}}$
grows exponentially and we say that $f$ is 
\textsl{hyperbolic}.
\end{enumerate}

\medskip

\item[$\diamond$] \textbf{The Jonqui\`eres group}.

Let us fix an affine chart of $\mathbb{P}^2_\mathbb{C}$ 
with coordinates $(x,y)$. The 
\textsl{Jonqui\`eres group} $\mathrm{J}$ is the subgroup of 
the Cremona group of all maps of the form 
\begin{equation}\label{eq:jonq}
(x,y)\dashrightarrow\left(\frac{A(y)x+B(y)}{C(y)x+D(y)},\frac{ay+b}{cy+d}\right)
\end{equation}
where $\left(\begin{array}{cc}
a & b \\
c & d
\end{array}\right)\in\mathrm{PGL}(2,\mathbb{C})$ and $\left(\begin{array}{cc}
A & B \\
C & D
\end{array}\right)\in\mathrm{PGL}(2,\mathbb{C}(y))$. 
The group $\mathrm{J}$ is the group of all birational
maps of $\mathbb{P}^1_\mathbb{C}\times\mathbb{P}^1_\mathbb{C}$
permuting the fibers of the projection onto the 
second factor; it is isomorphic to the semi-direct
product $\mathrm{PGL}(2,\mathbb{C}(y))\rtimes\mathrm{PGL}(2,\mathbb{C})$.

We can check with (\ref{eq:jonq}) that if $f$ belongs to 
$\mathrm{J}$, then $(\deg(f^k))_{k\in\mathbb{N}}$ grows
at most linearly; elements of $\mathrm{J}$ are either 
elliptic or Jonqui\`eres twists. Let us denote by $\mathcal{J}$ the set
of Jonqui\`eres twist:
\[
\mathcal{J}=\big\{f\in\mathrm{Bir}(\mathbb{P}^2_\mathbb{C})\,\vert\,\text{ $f$ Jonqui\`eres twist }\big\}.
\]
A Jonqui\`eres twist is called a \textsl{base-wandering Jonqui\`eres
twist} if its action on the basis of the rational fibration
has infinite order. 
Let us denote by 
$\mathrm{J}_0$ the normal subgroup of $\mathrm{J}$
 that preserves fiberwise the rational fibration, 
 that is the subgroup of those maps of the form 
\[
(x,y)\dashrightarrow\left(\frac{A(y)x+B(y)}{C(y)x+D(y)},y\right).
\]
The group $\mathrm{J}_0$ is isomorphic to 
$\mathrm{PGL}(2,\mathbb{C}(y))$. The group $\mathrm{J}_0$ has 
three maximal (for the inclusion) uncountable abelian 
subgroups
\begin{align*}
& \mathrm{J}_a=\big\{(x+a(y),y)\,\vert\,a\in\mathbb{C}(y)\big\}, && \mathrm{J}_m=\big\{(b(y)x,y)\,\vert\,b\in\mathbb{C}(y)^*\big\}
\end{align*}
and 
\[
\mathrm{J}_F=\left\{(x,y),\,\left(\frac{c(y)x+F(y)}{x+c(y)},y\right)\,\vert\,c\in\mathbb{C}[y]\right\}
\]
where $F$ denotes an element of 
$\mathbb{C}[y]$ that is not 
a square (\cite{Deserti:compositio}).

Let us associate to 
$f=\left(\frac{A(y)x+B(y)}{C(y)x+D(y)},y\right)\in\mathrm{J}_0$
the matrix $M_f=\left(\begin{array}{cc}
A & B\\
C & D
\end{array}\right)$. The \textsl{Baum Bott index} of~$f$ is $\mathrm{BB}(f)=\frac{\big(\mathrm{Tr}(M_f)\big)^2}{\det(M_f)}$ 
(by analogy with the Baum Bott index of a 
foliation) which is well defined in 
$\mathrm{PGL}$ and is invariant by conjugation.
This invariant $\mathrm{BB}$ indicates the 
degree growth:

\begin{pro}[\cite{CerveauDeserti}]\label{pro:bb}
Let $f$ be a Jonqui\`eres twist that preserves fiberwise
the rational fibration. The rational function 
$\mathrm{BB}(f)$ is constant if and only if $f$ is an
elliptic element.
\end{pro}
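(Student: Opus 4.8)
The plan is to reduce everything to the ratio of the two eigenvalues of $M_f$. Let $\tau=\mathrm{Tr}(M_f)$ and $\delta=\det(M_f)$ for a fixed representative of $M_f$, let $\mu_1,\mu_2\in\overline{\mathbb{C}(y)}$ be the eigenvalues, and set $\lambda:=\mu_1/\mu_2\in\overline{\mathbb{C}(y)}^*$. Then
\[
\mathrm{BB}(f)=\frac{\tau^2}{\delta}=\frac{(\mu_1+\mu_2)^2}{\mu_1\mu_2}=\lambda+\lambda^{-1}+2 .
\]
Since $\mathrm{BB}(f)$ lies in $\mathbb{C}(y)$, it is constant exactly when $\lambda+\lambda^{-1}\in\mathbb{C}$, i.e. when $\lambda$ is a root of a quadratic with coefficients in $\mathbb{C}$, i.e. when $\lambda\in\mathbb{C}$. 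So the proposition becomes: \emph{$\lambda$ is constant if and only if $f$ is elliptic.}

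Assume first $\lambda\in\mathbb{C}$; I would show $f$ is elliptic by cases. If $\tau=0$ then $\lambda=-1$ and Cayley--Hamilton gives $M_f^2=-\delta\,\mathrm{Id}$, so $f^2=\mathrm{id}$ and $f$ is elliptic. If $\tau\neq0$ and $\lambda=1$, the double eigenvalue $\tau/2$ lies in $\mathbb{C}(y)$; either $M_f$ is scalar and $f=\mathrm{id}$, or $M_f$ is conjugate over $\mathbb{C}(y)$ to $\bigl(\begin{smallmatrix}1&1\\0&1\end{smallmatrix}\bigr)$, so $f$ is conjugate inside $\mathrm{J}_0\subset\mathrm{Bir}(\mathbb{P}^2_\mathbb{C})$ to $(x,y)\mapsto(x+1,y)$, which extends to an automorphism of $\mathbb{P}^2_\mathbb{C}$; in either case $f$ is elliptic. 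Finally if $\tau\neq0$ and $\lambda\in\mathbb{C}\setminus\{\pm1\}$, then $\mu_1=\tfrac{\lambda}{\lambda+1}\tau$ and $\mu_2=\tfrac{1}{\lambda+1}\tau$ lie in $\mathbb{C}(y)$ and are distinct, so $M_f$ is diagonalisable over $\mathbb{C}(y)$ and conjugate in $\mathrm{PGL}(2,\mathbb{C}(y))$ to $\mathrm{diag}(\lambda,1)$; hence $f$ is conjugate in $\mathrm{Bir}(\mathbb{P}^2_\mathbb{C})$ to $(x,y)\mapsto(\lambda x,y)$, again (the restriction of) an automorphism of $\mathbb{P}^2_\mathbb{C}$, so $f$ is elliptic.

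For the converse I would argue by contraposition: assuming $\lambda\notin\mathbb{C}$, I want $f$ not to be elliptic, hence — since $f\in\mathrm{J}$ — a Jonqui\`eres twist. The map $f^k$ corresponds to the matrix $M_f^k$, whose eigenvalues are $\mu_1^k,\mu_2^k$, so by the same computation $\mathrm{BB}(f^k)=\lambda^k+\lambda^{-k}+2$. Now $\lambda$ is a non-constant element of the finite (degree $\le2$) extension $L=\mathbb{C}(y)(\lambda)$ of $\mathbb{C}(y)$, hence has a pole at some place $Q$ of $L$, say of order $m\ge1$; there $\lambda^k$ has a pole of order $km$ while $\lambda^{-k}$ is regular, so $\lambda^k+\lambda^{-k}$ has a pole of order $km$ at $Q$. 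As this function belongs to $\mathbb{C}(y)$, it has, at the place of $\mathbb{C}(y)$ below $Q$, a pole of order $km/e\ge km/2$ (with $e\in\{1,2\}$ the ramification index), so $\deg(\mathrm{BB}(f^k))\to\infty$. On the other hand, writing $M_{f^k}$ with polynomial entries of degree $\le\nu_k$ and without common factor, $\mathrm{BB}(f^k)=\mathrm{Tr}(M_{f^k})^2/\det(M_{f^k})$ is a quotient of polynomials of degree $\le2\nu_k$, so $\deg(\mathrm{BB}(f^k))\le2\nu_k$. Hence $\nu_k\to\infty$; since on $\mathbb{P}^1_\mathbb{C}\times\mathbb{P}^1_\mathbb{C}$ the map $f^k$ has bidegree $(1,\nu_k)$, the degree of $f^k$ in $\mathrm{Bir}(\mathbb{P}^2_\mathbb{C})$ is unbounded, so $f$ is not elliptic. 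This completes the proof.

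The heart of the argument is the identity $\mathrm{BB}(f^k)=\lambda^k+\lambda^{-k}+2$ together with the fact that the degree of this rational function is controlled both above (by the size of $M_{f^k}$, hence by $\deg f^k$) and below (it blows up as soon as $\lambda$ is non-constant). The only delicate points I foresee are bookkeeping ones: relating $\deg(f^k)$ to the matrix degree $\nu_k$ — routine once one passes to $\mathbb{P}^1_\mathbb{C}\times\mathbb{P}^1_\mathbb{C}$, where $f^k$ is a fibrewise Möbius map of bidegree $(1,\nu_k)$ — and tracking how the pole order of $\lambda^k$ on $L$ descends to $\mathbb{C}(y)$ through a possibly ramified degree-$\le2$ extension.
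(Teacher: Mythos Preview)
The paper does not supply a proof of this proposition: it is quoted verbatim from \cite{CerveauDeserti}, so there is no argument here to compare yours against. That said, your approach is sound and self-contained. The reduction $\mathrm{BB}(f)=\lambda+\lambda^{-1}+2$ with $\lambda=\mu_1/\mu_2$ is the natural invariant to isolate, and your case analysis for the direction ``$\lambda\in\mathbb{C}\Rightarrow f$ elliptic'' is correct (in the Jordan case you land in $\mathrm{J}_a$, in the diagonalisable case in $\mathrm{J}_m$ with constant multiplier, exactly the normal forms the paper lists). For the converse, the valuation argument on the degree-$\le 2$ extension $L=\mathbb{C}(y)(\lambda)$ is a clean way to force $\deg\bigl(\mathrm{BB}(f^k)\bigr)\to\infty$, and bounding this by $2\nu_k$ gives unbounded $\deg(f^k)$.

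Two small points worth tightening. First, make the link between $\nu_k$ and $\deg(f^k)$ explicit: with $A_k,B_k,C_k,D_k\in\mathbb{C}[y]$ of degree $\le\nu_k$ and no common factor, homogenising $\bigl(\tfrac{A_kx+B_k}{C_kx+D_k},y\bigr)$ on $\mathbb{P}^2$ gives $\deg(f^k)\in\{\nu_k,\nu_k+1\}$, so $\nu_k\to\infty$ indeed forces $\deg(f^k)\to\infty$. Second, the statement as printed begins ``Let $f$ be a Jonqui\`eres twist\ldots'', which taken literally makes the ``only if'' direction vacuous; you have (correctly) interpreted and proved the intended statement for arbitrary $f\in\mathrm{J}_0$.
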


A direct consequence is the following: 

\begin{cor}
Let $f$ be a non-base wandering Jonqui\`eres twist;
the rational function $\mathrm{BB}(f)$ is constant 
if and only if $f$ is an elliptic element.
\end{cor}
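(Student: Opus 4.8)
The plan is to deduce this from Proposition~\ref{pro:bb} by replacing $f$ with a suitable iterate. Since a Jonqui\`eres twist is never elliptic, the assertion amounts to saying that $\mathrm{BB}(f)$ is non-constant. Up to birational conjugacy we may assume that $f\in\mathrm{J}$: a Jonqui\`eres twist preserves a unique pencil of rational curves, and conjugating by a birational map carrying that pencil to the pencil of fibres of the projection onto the second factor places $f$ in $\mathrm{J}$. This loses nothing, because the growth type of $(\deg(f^k))_{k\in\mathbb{N}}$ --- hence the property of being elliptic --- and the rational function $\mathrm{BB}$ are both invariant under conjugation.

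So suppose $f\in\mathrm{J}$, let $\bar f\in\mathrm{PGL}(2,\mathbb{C})$ be the induced action on the base of the fibration, and let $n$ be its order, which is finite precisely because $f$ is non-base-wandering. Then $g:=f^{n}$ lies in $\mathrm{J}_0$, so $\mathrm{BB}(g)$ is defined, and for a non-base-wandering twist one sets $\mathrm{BB}(f):=\mathrm{BB}(g)$. I claim $g$ is again a Jonqui\`eres twist, so that Proposition~\ref{pro:bb} applies to it: since elements of $\mathrm{J}$ are either elliptic or Jonqui\`eres twists, it suffices to check that $g$ is not elliptic. But if $(\deg(g^k))_{k\in\mathbb{N}}=(\deg(f^{nk}))_{k\in\mathbb{N}}$ were bounded, then writing an arbitrary $m$ as $m=nq+r$ with $0\le r<n$ and using $\deg(\alpha\beta)\le\deg(\alpha)\deg(\beta)$ we would get $\deg(f^m)\le\deg(f^{nq})\deg(f^{r})$, hence $(\deg(f^m))_{m\in\mathbb{N}}$ bounded and $f$ elliptic, a contradiction. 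The same estimate shows more generally that $f$ is elliptic if and only if $g$ is.

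Now apply Proposition~\ref{pro:bb} to the fibre-preserving Jonqui\`eres twist $g$: $\mathrm{BB}(g)$ is constant if and only if $g$ is elliptic. Combining this with the equivalence of the previous paragraph and with $\mathrm{BB}(f)=\mathrm{BB}(g)$ gives that $\mathrm{BB}(f)$ is constant if and only if $f$ is elliptic, as claimed. (That $\mathrm{BB}(f)$ does not depend on the chosen multiple of $n$ is a small separate point: by the Cayley--Hamilton identity $\mathrm{Tr}(M^k)$ is, up to a power of $\det M$, a polynomial in $\mathrm{Tr}(M)^2/\det M$, so $\mathrm{BB}(h^k)$ is a non-constant polynomial function of $\mathrm{BB}(h)$ and is therefore constant exactly when $\mathrm{BB}(h)$ is --- a non-constant rational function taking infinitely many values.) The only step requiring any care is thus the passage from $f$ to $g=f^{n}$, and it is genuinely harmless: all that is needed is that iteration preserves the elliptic / Jonqui\`eres-twist dichotomy, so that Proposition~\ref{pro:bb} may legitimately be applied to $g$, which is exactly what submultiplicativity of the degree gives.
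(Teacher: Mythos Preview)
The paper offers no proof beyond the phrase ``A direct consequence is the following'', and your argument --- replace $f$ by the iterate $g=f^{n}\in\mathrm{J}_0$, check via submultiplicativity of degree that $f$ is elliptic iff $g$ is, and apply Proposition~\ref{pro:bb} to $g$ --- is exactly the intended one-line deduction, carefully spelled out. One small quibble: your opening reduction ``up to birational conjugacy we may assume $f\in\mathrm{J}$'' together with the claim that $\mathrm{BB}$ is invariant under such conjugation is not quite right, since $\mathrm{BB}$ is only defined (and only conjugation-invariant) inside $\mathrm{PGL}(2,\mathbb{C}(y))$; the corollary is implicitly already about $f\in\mathrm{J}$, so that paragraph can simply be dropped without affecting the rest of your argument.
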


Every $f\in\mathrm{Bir}(\mathbb{P}^2_\mathbb{C})$ admits a resolution
\[
\xymatrix{
&S \ar[dl]_{\pi_1} \ar[dr]^{\pi_2} &\\
\mathbb{P}^2_\mathbb{C}\ar@{-->}[rr]_f &  & \mathbb{P}^2_\mathbb{C} }
\]
where $\pi_1$, $\pi_2$ are sequences of blow-ups. 
The resolution is \textsl{minimal} if and only if
no $(-1)$-curve of~$S$ are contracted by both 
$\pi_1$ and $\pi_2$. Assume that the 
resolution is minimal; the \textsl{base-points} of
$f$ are the points blown-up by $\pi_1$, which 
can be points of $S$ or infinitely near points.
If $f$ belongs to $\mathrm{J}$, then $f$ has one 
base-point $p_0$ of multiplicity $d-1$ and $2d-2$ 
base-points $p_1$, $p_2$, $\ldots$, $p_{2d-2}$ of 
multiplicity $1$. 
Similarly the map $f^{-1}$ has one 
base-point $q_0$ of multiplicity $d-1$ and $2d-2$ 
base-points $q_1$, $q_2$, $\ldots$, $q_{2d-2}$ of 
multiplicity $1$. Let us denote by 
$\mathbf{e}_m\in\mathrm{NS}(S)$ the 
Néron-Severi class of the total transform of $m$ 
under $\pi_j$ (for $1\leq j\leq 2$). The action
of $f$ on $\ell$ and the classes 
$(\mathbf{e}_{p_j})_{0\leq j\leq 2d-2}$ 
is given by: 
\[
\left\{
\begin{array}{lll}
f_\sharp(\ell)=d\ell-(d-1)\mathbf{e}_{q_0}-\displaystyle\sum_{i=1}^{2d-2}\mathbf{e}_{q_i}\\
f_\sharp(\mathbf{e}_{p_0})=(d-1)\ell-(d-2)\mathbf{e}_{q_0}-\displaystyle\sum_{i=1}^{2d-2}\mathbf{e}_{q_i}\\
f_\sharp(\mathbf{e}_{p_i})=\ell-\mathbf{e}_{q_0}-\mathbf{e}_{q_i}\qquad\forall\,1\leq i\leq 2d-2
\end{array}
\right.
\]

\medskip

\item[$\diamond$] \textbf{Dynamical degree}.

Given a birational self-map $f\colon S\dashrightarrow S$ of a 
complex projective surface, its dynamical degree 
$\lambda(f)$ is a positive real number that measures
the complexity of the dynamics of $f$. Indeed $\log(\lambda(f))$
provides an upper bound for the topological entropy of
$f$ and is equal to it under natural assumptions 
(\emph{see} \cite{BedfordDiller, DinhSibony}). The dynamical
degree is invariant under conjugacy; as shown in 
\cite{BlancCantat} precise knowledge on~$\lambda(f)$
provides useful information on the conjugacy class
of $f$. By definition a \textsl{Pisot number} 
is an algebraic integer $\lambda\in]1,+\infty[$ whose
other Galois conjugates lie in the open unit disk; 
Pisot numbers include integers $d\geq 2$ as well as
reciprocal quadratic integers $\lambda>1$. A 
\textsl{Salem number} is an algebraic integer 
$\lambda\in]1,+\infty[$ whose other Galois conjugates
are in the closed unit disk, with at least one on 
the boundary. Diller and Favre proved the following
statement:

\begin{thm}[\cite{DillerFavre}]
Let $f$ be a birational 
self-map of a complex projective surface. 

If $\lambda(f)$ is different from $1$, then 
$\lambda(f)$ is a Pisot number or a Salem number.
\end{thm}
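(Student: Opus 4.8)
The plan is to realise $\lambda(f)$ as an eigenvalue of an integral isometry of a Lorentzian lattice, and then to read off the arithmetic from that geometric picture.

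First I would replace $(S,f)$ by a birational model on which $f$ is \emph{algebraically stable}; this is the technical heart of the matter and is precisely what Diller and Favre establish. Concretely, after finitely many blow-ups one obtains a smooth projective surface $X$ and a birational self-map, still written $f$, whose pull-back $f^{*}$ on $\mathrm{NS}(X)_{\mathbb{R}}$ (equivalently on $H^{1,1}(X,\mathbb{R})$) satisfies $(f^{*})^{n}=(f^{n})^{*}$ for all $n\geq 0$. Since $\lambda(f)$ is a birational invariant and, on an algebraically stable model, equals the spectral radius of the linear map $f^{*}$, it is enough to study $f^{*}$. As $f^{*}$ preserves the integral lattice $\mathrm{NS}(X)$, its characteristic polynomial $\chi$ is monic with integer coefficients; hence every eigenvalue of $f^{*}$, in particular $\lambda(f)$, is an algebraic integer.

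Next I would invoke the Hodge index theorem: the intersection form on $\mathrm{NS}(X)_{\mathbb{R}}$ has signature $(1,\rho-1)$ and $f^{*}$ is an isometry of it. Any isometry $T$ of a real quadratic space of signature $(1,\rho-1)$ acts, after passing to $T^{2}$ which preserves a nappe of the positive cone, on the hyperbolic space $\mathbb{H}^{\rho-1}$, and so is elliptic, parabolic, or loxodromic; the first two cases force spectral radius $1$. Since $\lambda(f)>1$ by hypothesis, $f^{*}$ is loxodromic: there is an $f^{*}$-invariant plane $P$ on which the form has signature $(1,1)$, spanned by two isotropic eigenvectors with eigenvalues $\lambda(f)$ and $\lambda(f)^{-1}$ (the eigenvector for $\lambda(f)$ being nef, by a Perron--Birkhoff argument on the invariant nef cone), while $P^{\perp}$ is negative definite and $f^{*}$-invariant, so $f^{*}|_{P^{\perp}}$ is an isometry of a definite form and all its eigenvalues have modulus $1$. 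Thus the eigenvalues of $f^{*}$ are $\lambda(f)$ (simple), $\lambda(f)^{-1}$ (simple), and $\rho-2$ further eigenvalues, each of modulus $1$.

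Finally I would deduce the Pisot/Salem alternative. The Galois conjugates of $\lambda(f)$ are roots of its minimal polynomial, which divides $\chi$, hence they are eigenvalues of $f^{*}$; and since $\lambda(f)$ is a simple eigenvalue, every conjugate of $\lambda(f)$ other than $\lambda(f)$ itself has modulus $\lambda(f)^{-1}<1$ or modulus exactly $1$. If they all have modulus $<1$ (which covers both $\lambda(f)\in\mathbb{Z}_{\geq 2}$ and the reciprocal quadratic case), then $\lambda(f)$ is a Pisot number; if at least one lies on the unit circle, then all conjugates lie in the closed unit disk with one on its boundary, so $\lambda(f)$ is a Salem number. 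The two genuine obstacles are the existence of the algebraically stable model (the deep ingredient, due to Diller and Favre) and the structural description of isometries of a Lorentzian lattice; the latter is classical hyperbolic geometry, the only care needed being to see that the leading eigenvalue is the positive real number $\lambda(f)$ rather than its negative.
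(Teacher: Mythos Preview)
The paper does not prove this theorem: it is quoted as a result of Diller and Favre and given no argument, so there is no ``paper's own proof'' to compare against. Your outline is the standard heuristic, but it contains a genuine gap.

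The problematic step is the sentence ``$f^{*}$ is an isometry of it''. For an \emph{automorphism} this is true by the projection formula, but for a birational self-map that is merely algebraically stable it is false in general: one only has the adjunction $(f^{*}\alpha)\cdot\beta=\alpha\cdot(f_{*}\beta)$, and $f_{*}$ is typically not the inverse of $f^{*}$. Already for a generic map of degree $d\geq 2$ on $\mathbb{P}^{2}$ (which is algebraically stable), $f^{*}$ is multiplication by $d$ on $\mathrm{NS}(\mathbb{P}^{2})\simeq\mathbb{Z}$ and visibly not an isometry. Once the isometry claim fails, the whole hyperbolic trichotomy and the conclusion that the remaining eigenvalues lie on the unit circle collapse; in fact, for non-automorphisms there is no reason for $\lambda(f)^{-1}$ to be an eigenvalue of $f^{*}$ on the finite-dimensional $\mathrm{NS}(X)$ at all.

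There are two correct routes. Diller and Favre argue directly on an algebraically stable model: using the adjoint pair $(f^{*},f_{*})$, positivity of the exceptional contributions, and a Perron--Frobenius analysis on the nef cone, they control the full spectrum of $f^{*}$ and extract the Salem/Pisot dichotomy. Alternatively (and closer in spirit to what you wrote), one can pass to the infinite-dimensional Picard--Manin completion, on which \emph{every} birational map does act as a genuine isometry of a Lorentzian form; the hyperbolic classification you invoke is then valid, and the conclusion follows since the minimal polynomial of $\lambda(f)$ divides the characteristic polynomial of $f^{*}$ on a suitable finite-dimensional invariant integral sublattice. Either way, the missing ingredient in your argument is precisely the mechanism that replaces ``$f^{*}$ is an isometry on $\mathrm{NS}(X)$'', which is false as stated.
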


One of the goal of \cite{BlancCantat} is the 
study of the structure of the set of all 
dynamical degrees $\lambda(f)$ where~$f$
runs over the group of birational maps 
$\mathrm{Bir}(S)$ and $S$ over the collection
of all projective surfaces. In particular 
they get:

\begin{thm}[\cite{BlancCantat}]
Let $\Lambda$ be the set of all dynamical degrees
of birational maps of complex projective surfaces. 
Then 
\begin{itemize}
\item[$\diamond$] $\Lambda$ is a well ordered 
subset of $\mathbb{R}_+;$

\item[$\diamond$] if $\lambda$ is an element of 
$\Lambda$, there is a real number 
$\varepsilon>0$ such that 
$]\lambda,\lambda+\varepsilon]$ does not 
intersect $\Lambda;$

\item[$\diamond$] there is a non-empty interval 
$]\lambda_G,\lambda_G+\varepsilon]$, with 
$\varepsilon>0$, on the right of the golden mean
that contains infinitely many Pisot and Salem
numbers, but does not contain any dynamical 
degree.
\end{itemize}
\end{thm}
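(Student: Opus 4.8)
The plan is to translate the problem into the geometry of isometries of an infinite-dimensional hyperbolic space, to classify which isometries come from birational maps, and then to feed the resulting arithmetic constraints on $\lambda(f)$ into Salem's structure theory for Pisot numbers and into the representation theory of lattice isometries. First I would work in the Picard--Manin space $\mathcal{Z}(S)$ of a surface $S$ --- the completion of $\varinjlim\mathrm{NS}(X)$ over all blow-ups $X\to S$, equipped with its intersection form of signature $(1,\infty)$ --- and in the associated hyperbolic space $\mathbb{H}_\infty$, on which $\mathrm{Bir}(S)$ acts by isometries. If $h$ denotes a hyperplane class, then $\deg(f^{n})$ is comparable to $\cosh\mathrm{dist}\big(h,(f^{n})_{*}h\big)$, so in the classification into four types recalled above $f$ is hyperbolic exactly when $f_{*}$ is a loxodromic isometry, and in that case $\log\lambda(f)$ equals the translation length $L(f_{*})=\inf_{x}\mathrm{dist}(x,f_{*}x)$. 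Hence $\Lambda\setminus\{1\}$ is exactly the set of exponentials of translation lengths of loxodromic isometries induced by birational surface maps, and everything reduces to understanding these lengths.

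The heart of the argument is a structural dichotomy for a map $f$ with $\lambda(f)>1$, obtained by inspecting the base points of all the iterates $f^{n}$, $n\in\mathbb{Z}$, in the bubble space. If this set of base points (including infinitely near ones) is finite, blowing it up turns $f$ into an automorphism of a projective surface $X$; then $\lambda(f)$ is the spectral radius of the induced isometry $f^{*}$ of the Lorentzian lattice $\mathrm{NS}(X)$, and since a lattice isometry has reciprocal characteristic polynomial, the signature $(1,\rho-1)$ forces that polynomial to be a Salem polynomial times cyclotomic factors, so $\lambda(f)$ is a Salem number. If the set of base points is infinite, then $f$ has a positive dynamical number of base points --- the invariant studied in this paper --- and a persistent base point forces, over two consecutive steps, the inequality $\deg(f^{n+1})\ge\deg(f^{n})+\deg(f^{n-1})$, whence $\lambda(f)\ge\varphi:=\tfrac{1+\sqrt5}{2}$; moreover $\lambda(f)$ is a Pisot number, and a larger dynamical number of base points or a richer persistent orbit pushes $\lambda(f)$ past an explicit bound strictly greater than $\varphi$. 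I expect the quantitative half of this second case to be the main obstacle: one must read off from the orbit combinatorics of persistent base points the exact linear recursion satisfied by $\deg(f^{n})$, both to obtain $\lambda(f)\ge\varphi$ and to show that the admissible values other than $\varphi$ remain bounded away from $\varphi$ on its right; in particular one gets that $\lambda(f)<\varphi$ forces $f$ to be conjugate to an automorphism.

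Granting the dichotomy, the first bullet follows by contradiction. Given $\lambda(f_{1})>\lambda(f_{2})>\cdots$ with limit $\mu\ge1$, pass to a subsequence with all $f_{n}$ in one case. In the non-regularizable case the admissible Pisot values form a well-ordered set --- once $\lambda(f_{1})$ bounds everything, the admissible recursions fall into finitely many combinatorial types and each type contributes an increasing family of dominant roots --- so no infinite decreasing sequence occurs. In the automorphism case I would invoke the lattice-theoretic input that the spectral radii of the isometries of N\'eron--Severi lattices arising from automorphisms of projective surfaces form a well-ordered subset of $\mathbb{R}$; this is proved by compactness and rigidity: normalise the marked lattice isometries, extract a limit isometry of $\mathbb{H}_\infty$ with translation length $\log\mu$, and use lower semicontinuity of the dynamical degree together with rigidity of loxodromic lattice isometries to force the sequence to be eventually constant. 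Since a union of two well-ordered subsets of $\mathbb{R}$ is well-ordered, $\Lambda$ is well-ordered. The second bullet is then formal: if $\Lambda$ accumulated at some $\lambda\in\Lambda$ from above, those points would be an infinite strictly decreasing sequence in $\Lambda$; and since $\Lambda\supseteq\{d\in\mathbb{Z}\mid d\ge2\}$ has no maximum, well-ordering yields $\lambda'=\min\big(\Lambda\cap(\lambda,\infty)\big)$, so any $\varepsilon$ with $\lambda+\varepsilon<\lambda'$ makes $]\lambda,\lambda+\varepsilon]$ disjoint from $\Lambda$.

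For the third bullet I would first exhibit an explicit hyperbolic self-map of $\mathbb{P}^{2}$ with exact Fibonacci degree growth; it has $\lambda(f)=\varphi$ and, $\varphi$ being non-reciprocal hence not a Salem number, it is not conjugate to an automorphism, so $\lambda_{G}:=\varphi$ belongs to $\Lambda$. Now take $\lambda\in\Lambda$ with $\varphi<\lambda\le\varphi+\varepsilon$ for $\varepsilon$ small. If $\lambda$ comes from an automorphism it is a Salem number, and the lattice-theoretic input above shows that the Salem numbers realised by automorphisms of projective surfaces do not accumulate at $\varphi$ from above, so a small enough $\varepsilon$ excludes this; if $\lambda$ comes from a non-regularizable map, the quantitative estimate in the dichotomy shows that the only admissible value immediately to the right of $\varphi$ is $\varphi$ itself, again a contradiction. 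Hence $]\varphi,\varphi+\varepsilon]$ is disjoint from $\Lambda$. On the other hand, by Salem's theorem the set of Pisot numbers is closed with least two-sided limit point $\varphi$, so there is a strictly decreasing sequence of Pisot numbers converging to $\varphi$, and Salem numbers accumulate at every Pisot number, in particular at $\varphi$ from above; therefore $]\varphi,\varphi+\varepsilon]$ contains infinitely many Pisot and Salem numbers yet no dynamical degree. This proves the three assertions.
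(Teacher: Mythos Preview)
The paper does not contain a proof of this theorem: it is quoted verbatim as a result of Blanc and Cantat, with citation \cite{BlancCantat}, and no argument is given or even sketched in the present paper. So there is nothing to compare your proposal against here; the statement serves only as background, parallel to the Diller--Favre theorem quoted just before it.

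That said, your sketch does follow the overall architecture of the Blanc--Cantat argument (Picard--Manin hyperbolic space, the regularizable/non-regularizable dichotomy, Salem versus Pisot, and the special role of the golden mean). A few of the steps you mark as routine are in fact where the work lies. The inequality $\deg(f^{n+1})\ge\deg(f^{n})+\deg(f^{n-1})$ does not drop out of a single persistent base point in the way you suggest; the actual argument goes through a careful analysis of how many base points are shared between $f^{n}$ and $f$ and uses Noether-type inequalities. Likewise, your ``lattice-theoretic input'' that the Salem spectral radii realised by surface automorphisms form a well-ordered set is not a soft compactness/semicontinuity statement: one needs to bound the Picard rank in terms of $\lambda$ (via an explicit inequality relating translation length and rank) and then invoke finiteness of spectral radii below a given bound in bounded rank. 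Finally, for the gap on the right of $\varphi$, Salem numbers realised by automorphisms \emph{can} in principle be close to $\varphi$ (Lehmer's number is smaller), so the exclusion really relies on the rank bound just mentioned rather than on a general non-accumulation of Salem numbers. These are genuine inputs, not formalities, and your write-up should flag them as such rather than fold them into ``I would invoke'' clauses.
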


\medskip

\item[$\diamond$] \textbf{Dynamical number of base-points} 
(\cite{BlancDeserti}).

If $S$ is a projective smooth surface, every 
$f\in\mathrm{Bir}(S)$ admits a resolution
\[
\xymatrix{
&Z \ar[dl]_{\pi_1} \ar[dr]^{\pi_2} &\\
S\ar@{-->}[rr]_f &  & S }
\]
where $\pi_1$, $\pi_2$ are sequences of blow-ups. 
The resolution is \textsl{minimal} if and only if
no $(-1)$-curve of~$Z$ are contracted by both 
$\pi_1$ and $\pi_2$. Assume that the 
resolution is minimal; the \textsl{base-points} of
$f$ are the points blown-up by $\pi_1$, which 
can be points of $S$ or infinitely near points.
We denote by $\mathfrak{b}(f)$ the number of 
such points, which is also equal to the difference
of the ranks of $\mathrm{Pic}(Z)$ and 
$\mathrm{Pic}(S)$, and thus equal to 
$\mathfrak{b}(f^{-1})$.

Let us define the \textsl{dynamical number of 
base-points} of $f$ by
\[
\mu(f)=\displaystyle\lim_{k\to +\infty}\frac{\mathfrak{b}(f^k)}{k}.
\]
Since 
$\mathfrak{b}(f\circ\varphi)\leq\mathfrak{b}(f)+\mathfrak{b}(\varphi)$
for any $f$, $\varphi\in\mathrm{Bir}(S)$ we see
that $\mu(f)$ is a non-negative real number. 
Moreover $\mathfrak{b}(f^{-1})$ and 
$\mathfrak{b}(f)$ being equal we get
$\mu(f^k)=\vert k\mu(f)\vert$ for any 
$k\in\mathbb{Z}$. Furthermore the dyna\-mical 
number of base-points is an invariant of conjugation:
if $\psi\colon S\dashrightarrow Z$ is a birational
map between smooth projective surfaces and 
if~$f$ belongs to $\mathrm{Bir}(S)$, then 
$\mu(f)=\mu(\psi\circ f\circ\psi^{-1})$. In 
particular if $f$ is conjugate to an automorphism
of a smooth projective surface, then $\mu(f)=0$.
The converse holds, {\it i.e.} $f\in\mathrm{Bir}(S)$
is conjugate to an automorphism of a smooth 
projective surface if and only if $\mu(f)=0$
(\cite[Proposition 3.5]{BlancDeserti}). 
This follows from the geometric interpretation
of $\mu$ we will recall now. If $f\in\mathrm{Bir}(S)$
is a birational map, a (possibly infinitely near)
base-point~$p$ of $f$ is a 
\textsl{persistent base-point} of $f$ if there
exists an integer $N$ such that $p$ is a base-point
of $f^k$ for any $k\geq N$ but is not a base-point
of $f^{-k}$ for any $k\geq N$. We put an 
equivalence relation on the set of points that 
belongs to $S$ or are infinitely near: take a 
minimal resolution of $f$ 
\[
\xymatrix{
&Z \ar[dl]_{\pi_1} \ar[dr]^{\pi_2} &\\
S\ar@{-->}[rr]_f &  & S }
\]
where $\pi_1$, $\pi_2$ are sequences of blow-ups; the 
point $p$ is \textsl{equivalent} to $q$ if there 
exists an integer $k$ such that 
$(\pi_2\circ\pi_1^{-1})^k(p)=q$.
Denote by $\nu$ the number of equivalence classes of
persistent base-points of $f$; then the set 
\[
\big\{\mathfrak{b}(f^k)-\nu k\,\vert\,k\geq 0\big\}\subset\mathbb{Z}
\]
is bounded. In particular $\mu(f)$ is an integer, equal 
to $\nu$ (\emph{see} \cite[Proposition 3.4]{BlancDeserti}). 
This gives a bound for $\mu(f)$; indeed if
$f\in\mathrm{Bir}(\mathbb{P}^2_\mathbb{C})$ is a map 
whose base-points have multiplicities $m_1\geq m_2\geq \ldots\geq m_r$
then (\emph{see for instance} \cite[\S 2.5]{AlberichCarraminana} and \cite[Corollary 2.6.7]{AlberichCarraminana})
\[
\left\{
\begin{array}{lll}
\displaystyle\sum_{i=1}^rm_i=3(\deg(f)-1) \\
\displaystyle\sum_{i=1}^rm_i^2=\deg(f)^2-1 \\
m_1+m_2+m_3\geq \deg(f)+1 
\end{array}
\right.
\]
in particular $r\leq 2\deg(f)-1$ so
$\nu\leq 2\deg(f)-1$ and $\mu(f)\leq 2\deg(f)-1$. 

If $f\in\mathrm{Bir}(\mathbb{P}^2_\mathbb{C})$
is a Jonqui\`eres twist, then there exists an integer
$a\in\mathbb{N}$ such that 
\[
\displaystyle\lim_{k\to +\infty}\frac{\deg(f^k)}{k}=a^2\,\frac{\mu(f)}{2};
\]
moreover $a$ is the degree of the curves of the 
unique pencil of rational curves inva\-riant by $f$
(\emph{see} \cite[Proposition 4.5]{BlancDeserti}).
In particular $a=1$ if and only if $f$ preserves a 
pencil of lines. On the one hand 
$\big\{\mu(f)\,\vert\,f\in\mathrm{Bir}(\mathbb{P}^2_\mathbb{C})\big\}\subseteq\mathbb{N}$ 
and on the other hand if $f$ belongs to $\mathcal{J}$,
then $\mu(f)>0$; as a result 
\[
\big\{\mu(f)\,\vert\,f\in\mathcal{J}\big\}\subseteq\mathbb{N}\smallsetminus\{0\}.
\]
Let us recall that if 
$f_{\alpha,\beta}=\left(\frac{\alpha x+y}{x+1},\beta y\right)$
then $\mu(f_{\alpha,\beta})=1$. Indeed, by induction one can prove
that 
$f_{\alpha,\beta}^{2n}=\left(\frac{P_n(x,y)}{Q_n(x,y)},\beta^{2n}y\right)$ with 
\begin{align*}
& P_n(x,y)=\displaystyle\sum_{0\leq i+j\leq n+1}a_{ij}x^iy^j && 
Q_n(x,y)=\displaystyle\sum_{0\leq i+j\leq n}b_{ij}x^iy^j
\end{align*} 
and $a_{ij}\geq 0$, $b_{ij}\geq 0$ for any $n\geq 0$, so that 
$\deg f_{\alpha,\beta}^{2n}=n+1$ for any $n\geq 0$; we conclude using the 
fact that $\mu(f)=2\displaystyle\lim_{k\to +\infty}\frac{\deg(f^k)}{k}$.
Furthermore 
$\mu(f_{\alpha,\beta}^k)=\vert k\mu(f_{\alpha,\beta})\vert=\vert k\vert$
for any $k\in\mathbb{Z}$. Hence 
\[
\big\{\mu(f)\,\vert\,f\in\mathcal{J}\big\}=\mathbb{N}\smallsetminus\{0\}
\]
and
\[
\big\{\mu(f)\,\vert\,f\in\mathrm{Bir}(\mathbb{P}^2_\mathbb{C})\big\}=\mathbb{N}.
\]
As we have seen if $f$ belongs to $\mathcal{J}$, then 
$\mu(f)=2\displaystyle\lim_{k\to +\infty}\frac{\deg(f^k)}{k}$. 
Can we express $\mu(f)$ in a simpliest way ? We will see that 
if $f$ is a non base-wandering Jonqui\`eres twist, the 
answer is yes.

\medskip

\item[$\diamond$] \textbf{Results}.

The dynamical number of base-points of birational self maps of the
complex projective plane satisfies the following properties:

\begin{theoalph}\label{thm:faitsdivers}
\begin{itemize}
\item[$\diamond$] If $f$ is a birational self-map from 
$\mathbb{P}^2_\mathbb{C}$ into itself, then its dynamical number
of base-points is bounded: if $f\in\mathrm{Bir}(\mathbb{P}^2_\mathbb{C})$, 
then $\mu(f)\leq 2\deg(f)-1$.

\item[$\diamond$] We can  
precise the set of all dynamical 
numbers of base-points of birational maps of 
$\mathbb{P}^2_\mathbb{C}$ $($resp. of 
Jonqui\`eres maps of $\mathbb{P}^2_\mathbb{C}):$ 
\begin{align*}
&\big\{\mu(f)\,\vert\,f\in\mathrm{Bir}(\mathbb{P}^2_\mathbb{C})\big\}=\mathbb{N}
&& \text{and}
&&
\big\{\mu(f)\,\vert\,f\in\mathcal{J}\big\}=\mathbb{N}\smallsetminus\{0\}.
\end{align*}

\item[$\diamond$] There exist sequences $(f_n)_n$ of birational
self-maps of $\mathbb{P}^2_\mathbb{C}$ such that
\begin{itemize}
\item[$\bullet$] $\mu(f_n)>0$ for any $n\in\mathbb{N}$; 

\item[$\bullet$] $\mu\big(\displaystyle\lim_{n\to +\infty}f_n\big)=0$.
\end{itemize}

\item[$\diamond$] There exist sequences $(f_n)_n$ of birational self-maps of $\mathbb{P}^2_\mathbb{C}$ such that
\begin{itemize}
\item[$\bullet$] $\mu(f_n)=0$ for any $n\in\mathbb{N}$; 

\item[$\bullet$] $\mu\big(\displaystyle\lim_{n\to +\infty}f_n\big)>0$.
\end{itemize}
\end{itemize}
\end{theoalph}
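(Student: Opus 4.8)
The plan is to prove Theorem~\ref{thm:faitsdivers} point by point, since three of its four bullets are already established in the preceding discussion and the remaining one reduces to exhibiting explicit examples. For the first bullet, the inequality $\mu(f)\leq 2\deg(f)-1$ is exactly the bound derived above from the Noether equations: $\mu(f)=\nu$ equals the number of equivalence classes of persistent base-points, which is bounded by the total number $r$ of base-points of $f$, and the three Noether relations $\sum m_i=3(\deg(f)-1)$, $\sum m_i^2=\deg(f)^2-1$, $m_1+m_2+m_3\geq\deg(f)+1$ force $r\leq 2\deg(f)-1$. So that bullet is a one-line citation of the computation already displayed. The second bullet, describing the sets $\{\mu(f)\,\vert\,f\in\mathrm{Bir}(\mathbb{P}^2_\mathbb{C})\}=\mathbb{N}$ and $\{\mu(f)\,\vert\,f\in\mathcal{J}\}=\mathbb{N}\smallsetminus\{0\}$, follows from: (i) $\mu$ takes values in $\mathbb{N}$ (integrality, \cite[Proposition 3.4]{BlancDeserti}); (ii) $f\in\mathcal{J}\Rightarrow\mu(f)>0$ (since a Jonqui\`eres twist is not conjugate to an automorphism, \cite[Proposition 3.5]{BlancDeserti}); (iii) the explicit family $f_{\alpha,\beta}=\left(\frac{\alpha x+y}{x+1},\beta y\right)$ with $\mu(f_{\alpha,\beta})=1$ gives, via $\mu(f_{\alpha,\beta}^k)=\vert k\vert$, every positive integer; and for $0\in\mathbb{N}$ one takes any automorphism of $\mathbb{P}^2_\mathbb{C}$. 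I would simply assemble these observations.

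For the third bullet I would revisit the family $f_{\alpha,\beta}=\left(\frac{\alpha x+y}{x+1},\beta y\right)$, which has $\mu(f_{\alpha,\beta})=1>0$, and let one parameter degenerate so that the limit map becomes an automorphism (hence has $\mu=0$). Concretely, writing $f_{\alpha,\beta}$ in homogeneous coordinates $(x:y:z)\dashrightarrow(\alpha xz+yz:\beta y(x+z):z(x+z))$ one sees the degree-$2$ map; as $\alpha\to\infty$ after rescaling, or along a suitable one-parameter family $f_n$ with, say, the coefficient of $y$ in the numerator tending to $0$, the limit is a linear map (an element of $\mathrm{PGL}(3,\mathbb{C})$), so $\mu(\lim_n f_n)=0$ while each $f_n$ is a genuine Jonqui\`eres twist with $\mu(f_n)=1$. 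The point to check here is that the limit is genuinely of lower degree and linear; this is a direct computation on the homogeneous formulas. For the fourth bullet, I would run the construction in reverse: start with a sequence $f_n\in\mathrm{PGL}(3,\mathbb{C})$ (so $\mu(f_n)=0$) converging to a degree-$2$ birational map that is a Jonqui\`eres twist with $\mu>0$. For instance approximate the quadratic involution or a suitable element of the family $f_{\alpha,\beta}$ by a sequence of linear maps: writing $f_{\alpha,\beta}$ in homogeneous form as above, one perturbs by replacing the quadratic terms with $\frac1n$ times themselves plus linear terms arranged so that for finite $n$ the map drops to degree $1$ (the numerators acquiring a common linear factor with the denominator), while at $n=\infty$ the common factor disappears and the degree jumps to $2$.

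The main obstacle will be the fourth bullet: producing linear maps $f_n$ whose limit is an honest Jonqui\`eres twist requires the degree to \emph{drop} along the sequence, i.e.\ the homogeneous components $P_0^{(n)},P_1^{(n)},P_2^{(n)}$ must share a common factor for every finite $n$ that degenerates away in the limit; engineering this so that the $n=\infty$ map is not merely birational of degree $2$ but actually a Jonqui\`eres \emph{twist} (unbounded degree growth, equivalently $\mathrm{BB}$ non-constant if it preserves the fibration fiberwise, or base-wandering) takes a careful explicit choice. A clean way is to conjugate the known example $f_{\alpha,\beta}$: if $g_n\in\mathrm{PGL}(3,\mathbb{C})$ and $g_n\to g$ while $h_n\in\mathrm{PGL}(3,\mathbb{C})$ with $g_n h_n g_n^{-1}$ of degree $1$ for each $n$ but $g\,h\,g^{-1}$ of degree $2$, one gets the phenomenon; alternatively one uses that the set of degree-$1$ maps is not closed in $\mathrm{Bir}(\mathbb{P}^2_\mathbb{C})$ for the topology in which such limits are taken (the Zariski-type topology on $\mathrm{Bir}$ used in \cite{BlancDeserti}), and $\mu$ fails to be lower semicontinuous there. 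I would therefore first recall precisely which topology on $\mathrm{Bir}(\mathbb{P}^2_\mathbb{C})$ is meant (pointwise/graph convergence of the defining polynomials), then give one fully explicit sequence for each of the last two bullets and verify the degrees and the value of $\mu$ by the formulas recalled above — in particular $\mu(f)=2\lim_{k\to\infty}\deg(f^k)/k$ for $f\in\mathcal{J}$.
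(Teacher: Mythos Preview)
Your treatment of the first two bullets is correct and matches the paper exactly: both are extracted verbatim from the discussion preceding the statement.

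For the third bullet your idea is plausible but left vague; the paper instead gives fully explicit families. One is $f_t=(x+t,\,y\,\frac{x}{x+1})$, for which $f_t^n=(x+nt,\,y\prod_{j=0}^{n-1}\frac{x+jt}{x+jt+1})$: when no integer multiple of $t$ equals $1$ there is no telescoping, $\deg(f_t^n)=n+1$ and $\mu(f_t)=2$, while for $t=1$ the product collapses and $\mu(f_1)=0$; taking $t_n\to 1$ generic gives the desired sequence. The paper also cites \cite{CantatDesertiXie}, where it is shown that every elliptic element of infinite order is a limit of Jonqui\`eres twists. Either route is cleaner than trying to degenerate $f_{\alpha,\beta}$ to a linear map.

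For the fourth bullet your approach has a genuine gap. In the natural (Zariski--Blanc--Furter, or Euclidean on coefficients) topology on $\mathrm{Bir}(\mathbb{P}^2_\mathbb{C})$, the degree is lower semicontinuous: the sets $\mathrm{Bir}_{\leq d}$ are closed, and in particular $\mathrm{PGL}(3,\mathbb{C})=\mathrm{Bir}_{\leq 1}$ is closed. Hence a limit of linear maps is linear, and your scheme of writing degree-$2$ homogeneous triples with a common linear factor for every finite $n$ that ``degenerates away'' cannot work: the condition of having a common factor is itself closed, so the limit triple still has one. Your alternative (``the set of degree-$1$ maps is not closed'') is therefore false, and no conjugation trick with $g_n\in\mathrm{PGL}_3$ will help, since conjugating linear maps by linear maps stays linear.

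The paper sidesteps this by using maps with $\mu=0$ that are \emph{not} linear: Halphen twists. These are conjugate to automorphisms of rational surfaces (hence $\mu=0$) but have large degree as Cremona maps. Concretely, following \cite{CantatDesertiXie}, one takes a general pencil of plane cubics with two marked base points $p_1,p_2$; the composition $\sigma_1\circ\sigma_2$ of the associated Bertini-type involutions is a Halphen twist. Degenerating the cubic pencil to the pencil of reducible cubics $\{\text{line through }0\}\cup\{x^2+y^2=z^2\}$ with $p_1=(1:0:1)$, $p_2=(0:1:1)$ makes $\sigma_1\circ\sigma_2$ converge to a genuine Jonqui\`eres twist (explicitly conjugate to $(s,t)\mapsto(s,\frac{(s-1)t+1}{(s^2+1)t+s-1})$). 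This gives $\mu(f_n)=0$ for all $n$ and $\mu(\lim f_n)>0$. The key conceptual point you are missing is that $\mu(f)=0$ does not force $f$ to be linear; it only forces $f$ to be conjugate to an automorphism of \emph{some} smooth projective surface.
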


\smallskip

Let us now give a formula to determine the 
dynamical number of base-points of Jonqui\`eres
twists that preserves fiberwise the fibration. 

\begin{theoalph}\label{thm:main}
Let $f=\left(\frac{A(y)x+B(y)}{C(y)x+D(y)},y\right)$ 
be a Jonqui\`eres twist that preserves fiberwise the
fibration, and let $M_f$ be its associated matrix. Denote
by $\mathrm{Tr}(M_f)$ the trace $M_f$,
by $\chi_f$ the characteristic polynomial of~$M_f$, 
and by $\Delta_f$ the discriminant of $\chi_f$. Then 
exactly one of the following holds:
\smallskip
\begin{itemize}
\item[1.] If $\chi_f$ has two distinct roots in 
$\mathbb{C}[y]$,
then  $f$ is conjugate to 
$g=\left(\frac{\mathrm{Tr}(M_f)+\delta_f}{\mathrm{Tr}(M_f)-\delta_f}\,x,y\right)$, where $\delta_f^2=\Delta_f$, and
\[
\mu(f)=\mu(g)=2(\deg (g)-1).
\]

\item[2.] If $\chi_f$ has no root in $\mathbb{C}[y]$,
set
\[
\Omega_f=\mathrm{gcd}\left(\frac{\mathrm{Tr}(M_f)}{2},\left(\frac{\mathrm{Tr}(M_f)}{2}\right)^2-\det(M_f)\right)
\]
and let us define $P_f$ and $S_f$ as
\begin{align*}
& \frac{\mathrm{Tr}(M_f)}{2}=P_f\Omega_f, && \left(\frac{\mathrm{Tr}(M_f)}{2}\right)^2-\det(M_f)=S_f\Omega_f.
\end{align*}

\smallskip

\begin{enumerate}
\item[2.a.]  If $\mathrm{gcd}(\Omega_f,S_f)=1$, 
then
\begin{enumerate}
\item[$\diamond$]  if $\deg(S_f)\leq \deg(\Omega_f)+2\deg(P_f)$, then
$\mu(f)=\deg(\Omega_f)+2\deg(P_f)$;
\item[$\diamond$]  otherwise $\mu(f)=\deg(S_f)$.
\end{enumerate}

\smallskip

\item[2.b.] If $S_f=\Omega_f^pT_f$ with $p\geq 1$ and 
$\mathrm{gcd}(T_f,\Omega_f)=1$, then
\begin{enumerate}
\item[$\diamond$]  if $\deg(S_f)\leq \deg(\Omega_f)+2\deg(P_f)$, then
$\mu(f)=2\deg(P_f)$;
\item[$\diamond$] otherwise
$\mu(f)=\deg(S_f)-\deg(\Omega_f)$.
\end{enumerate}

\smallskip

\item[2.c.]  If $\Omega_f=S_f^p T_f$ with $p\geq 1$ and 
$\mathrm{gcd}(T_f,S_f)=1$, then
$\mu(f)=2\deg(P_f)+\deg(\Omega_f)-\deg(S_f).$
\end{enumerate}
\end{itemize}
\end{theoalph}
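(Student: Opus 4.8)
The plan is to conjugate $f$ into a convenient normal form and then estimate $\deg(f^k)$, using that every element of $\mathrm{J}_0$ preserves a pencil of lines — the fibres $y=\mathrm{const}$ — so that $\mu(h)=2\lim_{k\to+\infty}\deg(h^k)/k$ for all $h\in\mathrm{J}_0$. Since $\mathrm{J}_0\cong\mathrm{PGL}(2,\mathbb{C}(y))$ via $h\mapsto M_h$, with composition corresponding to matrix multiplication, replacing $M_f$ by a conjugate matrix amounts to conjugating $f$ by an element of $\mathrm{J}_0\subset\mathrm{Bir}(\mathbb{P}^2_\mathbb{C})$, which leaves $\mu(f)$ unchanged. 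Normalise $M_f$ to have entries in $\mathbb{C}[y]$ with trivial common factor and set $\tau=\mathrm{Tr}(M_f)$, $\delta=\det(M_f)$. As $\mathbb{C}[y]$ is integrally closed in $\mathbb{C}(y)$, any root of $\chi_f=X^2-\tau X+\delta$ lying in $\mathbb{C}(y)$ already lies in $\mathbb{C}[y]$; and $\chi_f$ has a repeated root in $\mathbb{C}[y]$ precisely when $\Delta_f=0$, i.e. when $\mathrm{BB}(f)=\tau^2/\delta$ is constant, which is impossible for a Jonqui\`eres twist by Proposition~\ref{pro:bb}. Hence exactly one of the two alternatives occurs.

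If $\chi_f$ has two distinct roots $\lambda_1,\lambda_2\in\mathbb{C}[y]$ (so $\lambda_{1,2}=\tfrac12(\mathrm{Tr}(M_f)\pm\delta_f)$ with $\delta_f^2=\Delta_f$), diagonalising $M_f$ over $\mathbb{C}(y)$ conjugates $f$ to $g=\bigl(\tfrac{\lambda_1}{\lambda_2}\,x,y\bigr)=\bigl(\tfrac{\mathrm{Tr}(M_f)+\delta_f}{\mathrm{Tr}(M_f)-\delta_f}\,x,y\bigr)$; writing $\lambda_1/\lambda_2$ in lowest terms one finds $\deg(g^k)=1+k\bigl(\deg(g)-1\bigr)$, hence $\mu(f)=\mu(g)=2(\deg(g)-1)$. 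In the other case $\chi_f$ is irreducible over $\mathbb{C}(y)$ and $\Delta_f$ is not a square; put $T=\tfrac12\mathrm{Tr}(M_f)$, $D=T^2-\delta=\tfrac14\Delta_f$, so that $D\in\mathbb{C}[y]$ is nonzero and not a square, $\Omega_f=\gcd(T,D)$, $T=P_f\Omega_f$, $D=S_f\Omega_f$ with $\gcd(P_f,S_f)=1$. The matrix $\bigl(\begin{smallmatrix}T&D\\1&T\end{smallmatrix}\bigr)$ is non-scalar with the same separable characteristic polynomial $\chi_f$, hence is $\mathbb{C}(y)$-conjugate to $M_f$, and so $f$ is conjugate to $g=\bigl(\tfrac{Tx+D}{x+T},y\bigr)$.

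The point of this representative is that $M_g=TI+N$ with $N^2=DI$, whence $M_g^{\,k}=u_kI+w_kN=\bigl(\begin{smallmatrix}u_k&w_kD\\ w_k&u_k\end{smallmatrix}\bigr)$, where $u_k,w_k\in\mathbb{C}[y]$ are determined by $u_k\pm w_k\sqrt D=(T\pm\sqrt D)^k$ and satisfy the key identity $u_k^2-Dw_k^2=\delta^k$. Dividing $M_g^{\,k}$ by $H_k:=\gcd(u_k,w_k)=\gcd(u_k,w_kD,w_k)$ gives a normalised matrix for $g^k$, and the degree formula for maps of $\mathrm{J}_0$, together with a bookkeeping of the base points at infinity, yields $\deg(g^k)=\max\bigl(\deg u_k,\ \deg D+\deg w_k\bigr)-\deg H_k+O(1)$ as $k\to+\infty$. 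Looking at the place at infinity of $\mathbb{C}(y)$ in the quadratic extension $\mathbb{C}(y)(\sqrt D)$, the element $T\pm\sqrt D$ has ``degree at infinity'' equal to $\max(\deg T,\tfrac12\deg D)$ (the two contributions cannot cancel when $\deg D$ is odd, and the even case is checked directly), so $\deg u_k$ and $\deg w_k$ are both $k\max(\deg T,\tfrac12\deg D)+O(1)$, and therefore
\[
\mu(f)=\mu(g)=2\Bigl(\max\bigl(\deg T,\tfrac12\deg D\bigr)-\gamma\Bigr),\qquad \gamma:=\lim_{k\to+\infty}\frac{\deg H_k}{k}.
\]

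The crux — and the main obstacle — is the computation of $\gamma$. From the identity above, any prime $\pi\mid H_k$ divides $\delta=\Omega_f\bigl(P_f^2\Omega_f-S_f\bigr)$. Expanding $u_k$ and $w_k$ out of the binomial expansion of $(T\pm\sqrt D)^k$ and using $T=P_f\Omega_f$, $D=S_f\Omega_f$, one controls $v_\pi(u_k)$ and $v_\pi(w_k)$, hence $v_\pi(H_k)$, in terms of $v_\pi(\Omega_f)$, $v_\pi(P_f)$, $v_\pi(S_f)$: for $\pi\nmid\Omega_f$ the coprimality $\gcd(P_f,S_f)=1$ forces the full $\pi$-part of $\delta$ to concentrate in a single prime of $\mathbb{C}(y)(\sqrt D)$ above $\pi$ dividing $T+\sqrt D$, so that $\pi\nmid u_k$ and thus $\pi\nmid H_k$; for $\pi\mid\Omega_f$ the leading monomials of the two expansions make $v_\pi(H_k)$ grow linearly in $k$, with a slope depending on whether $v_\pi(S_f)=0$ or not. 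Summing over $\pi$ reproduces exactly the trichotomy of the statement: $\gamma=\tfrac12\deg\Omega_f$ when $\gcd(\Omega_f,S_f)=1$; $\gamma=\deg\Omega_f$ when $S_f=\Omega_f^pT_f$; and $\gamma=\tfrac12\deg D$ when $\Omega_f=S_f^pT_f$, in which last case $\deg D\le 2\deg T$ holds automatically. Substituting these values of $\gamma$, together with $\deg T=\deg P_f+\deg\Omega_f$, $\deg D=\deg S_f+\deg\Omega_f$ and the equivalence $\deg S_f\le\deg\Omega_f+2\deg P_f\iff\deg D\le 2\deg T$, into the displayed formula yields all the announced expressions for $\mu(f)$. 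The two technical steps that require care are the precise degree formula for $\deg(g^k)$ and, above all, the prime-by-prime estimate of $\deg H_k$, whose outcome — and with it the three cases 2.a, 2.b, 2.c — is dictated by whether the leading monomials of $u_k$ and $w_k$ cancel modulo the powers of the irreducible factors of $\Omega_f$ and $S_f$.
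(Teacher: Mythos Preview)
Your plan and the paper's agree through the first moves: both conjugate case~1 to a diagonal map in $\mathrm{J}_m$ and compute $\deg(g^k)$ directly, and in case~2 both pass to the companion form $g=\bigl(\tfrac{Tx+D}{x+T},y\bigr)$ and diagonalise $M_g$ over $\mathbb{C}(y)(\sqrt D)$, so that $M_g^k$ is governed by the pair $u_k,w_k$ with $u_k\pm w_k\sqrt D=(T\pm\sqrt D)^k$. The execution then diverges. The paper separates $k=2\ell$ from $k=2\ell+1$, expands $u_k,w_k$ via binomials, factors out the evident power of $\Omega_f$, and in three parallel lemmas (one per sub-case 2.a, 2.b, 2.c) writes down explicit formulas for $\deg(g^k)$ before taking the limit. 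Your route isolates $H_k=\gcd(u_k,w_k)$ and introduces the Pell-type identity $u_k^2-Dw_k^2=\delta^k$, which the paper does not use, to confine the primes of $H_k$ to the divisors of $\delta=\Omega_f(P_f^2\Omega_f-S_f)$; you then compute $\gamma=\lim\deg H_k/k$ one prime at a time, and the trichotomy emerges uniformly from comparing $v_\pi(\Omega_f)$ with $v_\pi(S_f)$ rather than from three separate calculations. This is conceptually tidier and would extend at once to situations not covered by 2.a--2.c (e.g.\ when $\Omega_f$ and $S_f$ share some but not all prime factors), whereas the paper's reward is exact, non-asymptotic degree formulas. Both arguments are slightly loose at the same spot --- the paper repeatedly asserts $\gcd(\Omega_f,P_f)=1$ ``by construction'', which is false in general (take $T=y^2$, $D=y$), and your ``leading monomials cancel'' caveat is its counterpart --- but in each case the resulting discrepancy in $\deg(g^k)$ is $O(1)$ and disappears in the limit defining~$\mu(f)$.
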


As a consequence we are able to determine the 
dynamical number of base-points of 
non base-wandering Jonqui\`eres twists:

\begin{coralph}
Let $f=(f_1,f_2)$ be a non base-wandering Jonqui\`eres twist.

If $\ell$ is the order of $f_2$, then
$\mu(f)=~\frac{\mu(f^\ell)}{\ell}$ where $\mu(f^\ell)$
is given by Theorem \ref{thm:main}. 
\end{coralph}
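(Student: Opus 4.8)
The plan is to deduce the corollary from Theorem~\ref{thm:main} by passing to the power $f^\ell$. Write $f=(f_1,f_2)\in\mathrm{J}$, so that $f_2\in\mathrm{PGL}(2,\mathbb{C})$ is the action induced by $f$ on the base $\mathbb{P}^1_\mathbb{C}$ of the rational fibration. By hypothesis $f$ is non base-wandering, which by definition means precisely that $f_2$ has finite order; let $\ell$ be this order. Since $\mathrm{J}$ is a group and $f_2^\ell=\mathrm{id}$, the map $f^\ell$ lies in $\mathrm{J}$ and fixes the base pointwise, hence $f^\ell\in\mathrm{J}_0$, i.e. $f^\ell$ is a Jonqui\`eres map of the form $\bigl(\frac{A(y)x+B(y)}{C(y)x+D(y)},y\bigr)$ preserving fiberwise the fibration; let $M_{f^\ell}$ be its associated matrix.

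Next I would check that $f^\ell$ is actually a Jonqui\`eres \emph{twist}, which is what makes Theorem~\ref{thm:main} applicable to it. As $f$ is a Jonqui\`eres twist, $(\deg(f^k))_k$ grows linearly, in particular it is unbounded; hence so is the subsequence $(\deg(f^{\ell k}))_k=(\deg((f^\ell)^k))_k$, and $f^\ell$ is not an elliptic element. Moreover $f^\ell\in\mathrm{J}$, so $(\deg((f^\ell)^k))_k$ grows at most linearly, which rules out the Halphen and hyperbolic types. Therefore $f^\ell$ is a Jonqui\`eres twist preserving fiberwise the fibration, and Theorem~\ref{thm:main} assigns to it a well-defined value $\mu(f^\ell)$. (For the record, the non-ellipticity of $f^\ell$ is also what forces only cases~$1$ and~$2$ of Theorem~\ref{thm:main} to occur: if $\chi_{f^\ell}$ had a repeated root in $\mathbb{C}[y]$, then $M_{f^\ell}$ would be conjugate over $\mathbb{C}(y)$ to a Jordan block, so $f^\ell$ would be conjugate to an element $(x+a(y),y)$ of $\mathrm{J}_a$, which is elliptic — a contradiction.)

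It remains to compare $\mu(f)$ with $\mu(f^\ell)$, and here I would simply invoke the multiplicativity of $\mu$ recalled in the introduction: since $\mathfrak{b}(g)=\mathfrak{b}(g^{-1})$ and $\mu(g)=\lim_k\mathfrak{b}(g^k)/k$ for every birational self-map $g$, one has $\mu(g^k)=|k|\,\mu(g)$ for all $k\in\mathbb{Z}$; indeed $\mu(g^\ell)=\lim_j\mathfrak{b}(g^{\ell j})/j=\ell\lim_j\mathfrak{b}(g^{\ell j})/(\ell j)=\ell\,\mu(g)$ because the limit defining $\mu(g)$ exists and so agrees with that of any subsequence. Applied to $g=f$ and $k=\ell$, this gives $\mu(f^\ell)=\ell\,\mu(f)$; as $f\in\mathcal{J}$ forces $\mu(f)>0$ (and $\ell\geq 1$), we may divide to obtain
\[
\mu(f)=\frac{\mu(f^\ell)}{\ell},
\]
with $\mu(f^\ell)$ computed by Theorem~\ref{thm:main}. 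The only point requiring any real care is the one in the second paragraph — verifying that $f^\ell$ does not degenerate into an elliptic element, so that Theorem~\ref{thm:main} genuinely applies; everything else is bookkeeping with the multiplicativity of $\mu$ and the semidirect product structure of $\mathrm{J}$.
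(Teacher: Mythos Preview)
Your proposal is correct and matches the paper's intended argument: the paper does not write out a proof for this corollary, presenting it as an immediate consequence of Theorem~\ref{thm:main} together with the identity $\mu(f^k)=|k|\,\mu(f)$ already recalled in the introduction. Your additional care in checking that $f^\ell$ remains a Jonqui\`eres twist (so that Theorem~\ref{thm:main} genuinely applies) is a welcome clarification that the paper leaves implicit.
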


Combining the inequalities obtained in 
Theorem \ref{thm:faitsdivers} and Theorem \ref{thm:main} 
we get the following statement (we use the notations
introduced in Theorem \ref{thm:main}):

\begin{coralph}
Let $f$ be a Jonqui\`eres twist that preserves
fiberwise the fibration. 
Assume that $\chi_f$ has two distinct roots in 
$\mathbb{C}[y]$. 

Then there exists a conjugate $g$ of $f$ such that $g$
belongs to $\mathrm{J}_m$ and $\deg(g)\leq\deg(f)$.
For instance 
$g=\left(\frac{\mathrm{Tr}(M_f)+\delta_f}{\mathrm{Tr}(M_f)-\delta_f}\,x,y\right)$ suits.
\end{coralph}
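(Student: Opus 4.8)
The plan is to read off almost everything from the first case of Theorem \ref{thm:main} and then close the gap with the bound of Theorem \ref{thm:faitsdivers}. Under the standing hypothesis that $\chi_f$ has two distinct roots in $\mathbb{C}[y]$, part~1 of Theorem \ref{thm:main} already tells us that $f$ is conjugate (in $\mathrm{Bir}(\mathbb{P}^2_\mathbb{C})$) to $g=\left(\frac{\mathrm{Tr}(M_f)+\delta_f}{\mathrm{Tr}(M_f)-\delta_f}\,x,y\right)$, and that $\mu(f)=\mu(g)=2(\deg(g)-1)$. So the first thing I would do is observe that this $g$ lies in $\mathrm{J}_m$: it has the shape $(b(y)x,y)$ with $b=\frac{\mathrm{Tr}(M_f)+\delta_f}{\mathrm{Tr}(M_f)-\delta_f}$, which is the ratio of the two roots $\frac{\mathrm{Tr}(M_f)\pm\delta_f}{2}$ of $\chi_f$; these roots are distinct polynomials (so $\delta_f\neq 0$) and neither is the zero polynomial (since $\det(M_f)\neq 0$ in $\mathrm{PGL}(2,\mathbb{C}(y))$), hence $b\in\mathbb{C}(y)^*$ and indeed $g\in\mathrm{J}_m$. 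Thus the only genuinely new assertion to establish is the degree inequality $\deg(g)\leq\deg(f)$.

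For that, I would simply combine two facts already in hand. On the one hand, part~1 of Theorem \ref{thm:main} gives $\mu(f)=2(\deg(g)-1)$. On the other hand, the first item of Theorem \ref{thm:faitsdivers} gives $\mu(f)\leq 2\deg(f)-1$. Putting these together yields $2(\deg(g)-1)\leq 2\deg(f)-1$, that is $\deg(g)\leq\deg(f)+\frac{1}{2}$; since $\deg(g)$ and $\deg(f)$ are integers, this forces $\deg(g)\leq\deg(f)$. Combined with the previous paragraph, the single map $g=\left(\frac{\mathrm{Tr}(M_f)+\delta_f}{\mathrm{Tr}(M_f)-\delta_f}\,x,y\right)$ then witnesses both conclusions at once, which is exactly the statement.

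I do not expect any real obstacle here: the proof is essentially a one-step combination of the inequalities of Theorems \ref{thm:faitsdivers} and \ref{thm:main}, followed by an integrality rounding. The only points needing a line of care are (i) verifying that $b$ is a legitimate nonzero rational function so that $g$ genuinely belongs to $\mathrm{J}_m$ (this is where $\delta_f\neq 0$ and $\det(M_f)\neq 0$ are used), and (ii) recording that the conjugacy furnished by Theorem \ref{thm:main} takes place inside $\mathrm{Bir}(\mathbb{P}^2_\mathbb{C})$, so that $g$ qualifies as a conjugate of $f$ in the intended sense. If one wanted a self-contained argument avoiding the full strength of Theorem \ref{thm:main}, one could instead conjugate $f$ directly by the birational change of variables diagonalizing $M_f$ over $\mathbb{C}(y)$ — the eigenvectors of $M_f$ are defined over $\mathbb{C}(y)$ once $\chi_f$ splits there — but invoking Theorem \ref{thm:main} is cleaner and already gives the controlled representative $g$ in normal form.
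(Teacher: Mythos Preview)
Your argument is correct and follows precisely the approach the paper indicates: the corollary is introduced by the sentence ``Combining the inequalities obtained in Theorem \ref{thm:faitsdivers} and Theorem \ref{thm:main}'', and you carry out exactly that combination --- $\mu(f)=2(\deg(g)-1)$ from Theorem \ref{thm:main}.1 together with $\mu(f)\leq 2\deg(f)-1$ from Theorem \ref{thm:faitsdivers}, followed by the integrality rounding. The extra care you take to check $g\in\mathrm{J}_m$ is a welcome clarification but not a departure from the intended proof.
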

\end{itemize}

\section{Dynamical number of base-points of Jonqui\`eres twists}

In this section we will prove Theorem \ref{thm:main}.

Let $f$ be an element of $\mathrm{J}_0$; write $f$ 
as $\left(\frac{A(y)x+B(y)}{C(y)x+D(y)},y\right)$ 
with $A$, $B$, $C$, $D\in\mathbb{C}[y]$. The 
characteristic polynomial of 
$M_f=\left(
\begin{array}{cc}
A & B\\
C & D
\end{array}
\right)$ is $\chi_f(X)=X^2-\mathrm{Tr}(M_f)X+\det(M_f)$.
There are three possibilities:
\begin{enumerate}
\item $\chi_f$ has one root of multiplicity $2$ in $\mathbb{C}[y]$;

\item $\chi_f$ has two distinct roots in $\mathbb{C}[y]$;

\item $\chi_f$ has no root in $\mathbb{C}[y]$.
\end{enumerate}

Let us consider these three possibilities.
\begin{enumerate}
\item If $\chi_f$ has one root of multiplicity $2$
in $\mathbb{C}[y]$, then $f$ is conjugate to 
the elliptic birational map
$(x+a(y),y)$ of $\mathrm{J}_a$. In 
particular $f$ does not belong to $\mathcal{J}$. 

\bigskip

\item Assume that $\chi_f$ has two distinct roots. The
discriminant of $\chi_f$ is 
\[
\Delta_f=\big(\mathrm{Tr}(M_f)\big)^2-4\det(M_f)=\delta_f^2
\]
and the roots of $\chi_f$ are 
\begin{align*}
& \frac{\mathrm{Tr}(M_f)+\delta_f}{2} && \text{and} && \frac{\mathrm{Tr}(M_f)-\delta_f}{2}.
\end{align*}
Furthermore $M_f$ is conjugate to 
$\left(
\begin{array}{cc}
\frac{\mathrm{Tr}(M_f)+\delta_f}{2} & 0 \\
0 & \frac{\mathrm{Tr}(M_f)-\delta_f}{2}
\end{array}\right)$, {\it i.e.} $f$ is conjugate 
to $g=(a(y)x,y)\in\mathrm{J}_m$ with 
$a(y)=\frac{\mathrm{Tr}(M_f)+\delta_f}{\mathrm{Tr}(M_f)-\delta_f}$.
Let us first express $\mu(g)$ thanks to $\deg(g)$. 
Remark that $g^k=(a(y)^kx,y)$. Write $a(y)^j$ as 
$\frac{P_j(y)}{Q_j(y)}$ where $P_j$, $Q_j\in\mathbb{C}[y]$,
$\mathrm{gcd}(P_j,Q_j)=1$,
then $\deg(g^j)=\max(\deg(P_j),\deg(Q_j))+1$. 
But $\deg(P_j)=j\deg(P)$ and $\deg(Q_j)=j\deg(Q)$
so 
\[
\deg(g^k)=\max(k\deg(P_f),k\deg(Q_1))+1=k\underbrace{\max(\deg(P_f),\deg(Q_1))}_{\deg(g)-1}+1.
\]
As a consequence $\deg(g^k)=k\deg(g)-k+1$. According to
$\mu(g)=2\displaystyle\lim_{k\to +\infty}\frac{\deg(g^k)}{k}$
we get 
\[
\mu(g)=2\displaystyle\lim_{k\to+\infty}\Big(\deg(g)-1+\frac{1}{k}\Big)=2(\deg(g)-1).
\]

Let us now express $\mu(f)$ thanks to $f$. Since $f$ and
$g$ are conjugate $\mu(f)=\mu(g)$ hence $\mu(f)=2(\deg(g)-1)$.
But $g=\left(\frac{\mathrm{Tr}(M_f)+\delta_f}{\mathrm{Tr}(M_f)-\delta_f}x,y\right)$; in particular
\[
\deg(g)\leq 1+\max\big(\deg(\mathrm{Tr}(M_f)+\delta_f),\deg(\mathrm{Tr}(M_f)-\delta_f)\big) 
\]
and $\mu(f)\leq 2\max\big(\deg(\mathrm{Tr}(M_f)+\delta_f),\deg(\mathrm{Tr}(M_f)-\delta_f)\big)$.

\bigskip

\item Suppose that $\chi_f$ has no root in $\mathbb{C}[y]$.
This means that $\big(\mathrm{Tr}(M_f)\big)^2-4\det(M_f)$
is not a square in $\mathbb{C}[y]$ (hence $BC\not=0$). 
Note that 
\[
\left(
\begin{array}{cc}
C & \frac{D-A}{2}\\
0 & 1
\end{array}
\right)
\left(
\begin{array}{cc}
A & B\\
C & D
\end{array}
\right)
\left(
\begin{array}{cc}
C & \frac{D-A}{2}\\
0 & 1
\end{array}
\right)^{-1}=\left(
\begin{array}{cc}
\frac{\mathrm{Tr}(M_f)}{2} & \left(\frac{\mathrm{Tr}(M_f)}{2}\right)^2-\det(M_f) \\
1 & \frac{\mathrm{Tr}(M_f)}{2}
\end{array}
\right).
\]
In other words $f$ is conjugate to 
\[
g=\left(\frac{\frac{\mathrm{Tr}(M_f)}{2}x+\left(\frac{\mathrm{Tr}(M_f)}{2}\right)^2-\det(M_f)}{x+\frac{\mathrm{Tr}(M_f)}{2}},y\right)\in\mathrm{J}_{\frac{\mathrm{Tr}(M_f)}{2}}.
\]
Set $P(y)=\frac{\mathrm{Tr}(M_f)}{2}\in\mathbb{C}[y]$ and 
$F(y)=\left(\frac{\mathrm{Tr}(M_f)}{2}\right)^2-\det(M_f)\in\mathbb{C}[y]$,
{\it i.e.} $f$ is conjugate to 
$g=\left(\frac{P(y)x+F(y)}{x+P(y)},y\right)$ with 
$P$, $F\in\mathbb{C}[y]$. 
Denote by $d_P$ (resp. $d_F$) the degree of $P$ 
(resp. $F$).
Remark that 
$\deg(g)=\max(d_P+1,d_F,2)$. 

Let us now express $\deg(g^k)$.
Consider $M_g=\left(
\begin{array}{cc}
P & F\\
1 & P
\end{array}
\right)$ and set 
\begin{align*}
& Q=\left(
\begin{array}{cc}
\sqrt{F} & -\sqrt{F}\\
1 & 1
\end{array}
\right)&& \text{and} && D=\left(
\begin{array}{cc}
P+\sqrt{F} & 0\\
0 & P-\sqrt{F}
\end{array}
\right)
\end{align*}
Then $M_g^k=QD^kQ^{-1}$ hence 
\[
M_g^k=\left(
\begin{array}{cc}
\sqrt{F}\frac{(P+\sqrt{F})^k+(P-\sqrt{F})^k}{(P+\sqrt{F})^k-(P-\sqrt{F})^k} & F\\
1 &  \sqrt{F}\frac{(P+\sqrt{F})^k+(P-\sqrt{F})^k}{(P+\sqrt{F})^k-(P-\sqrt{F})^k}
\end{array}
\right)
\]

\smallskip

Let us set 
\[
\Upsilon_k=\sqrt{F}\frac{(P+\sqrt{F})^k+(P-\sqrt{F})^k}{(P+\sqrt{F})^k-(P-\sqrt{F})^k}
\]
and let us denote by $D_k$ (resp. $N_k$) the denominator
(resp. numerator) of $\Upsilon_k$.

\begin{lem}\label{lem:tec1}
Let $\Omega_f=\mathrm{gcd}(P,F)$ and write $P$ $($resp. $F)$ as $\Omega_f P_f$ 
$($resp. $\Omega_f S_f)$. Assume $\mathrm{gcd}(S_f,\Omega_f)=1$.
Then 
\begin{itemize}
\item[$\diamond$] if $d_{S_f}\leq d_{\Omega_f}+2d_{P_f}$,
then $\mu(g)=d_{\Omega_f}+2d_{P_f}$;

\item[$\diamond$] otherwise $\mu(g)=d_{S_f}$.
\end{itemize}
\end{lem}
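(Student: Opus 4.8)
The plan is to compute $\mu(g)$ through the identity $\mu(g)=2\lim_{k\to+\infty}\deg(g^k)/k$ recalled in the introduction, which applies here because $g=\left(\frac{P(y)x+F(y)}{x+P(y)},y\right)$ preserves the pencil of lines $y=\mathrm{const}$, so the integer $a$ attached to the Jonqui\`eres twist $g$ equals $1$. We may assume $P\neq 0$ (otherwise $g$ has order two, hence is elliptic and not a Jonqui\`eres twist; moreover $\mathrm{gcd}(0,F)=F$, $S_f=1$, so the asserted value is $d_{S_f}=0=\mu(g)$). Put $u=P+\sqrt F$ and $v=P-\sqrt F$; then $u\neq v$, $uv=P^2-F\neq 0$ and $u/v$ is not constant, so $u^k\neq v^k$ for all $k\geq 1$. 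Set
\[
V_k=u^k+v^k, \qquad W_k=\frac{u^k-v^k}{u-v}=\sum_{i=0}^{k-1}u^iv^{k-1-i}.
\]
Being symmetric in $u$ and $v$, both $V_k$ and $W_k$ are polynomials in $u+v=2P$ and $uv=P^2-F$, hence lie in $\mathbb{C}[y]$, and the expression for $M_g^k$ recalled above becomes $\Upsilon_k=\frac{V_k}{2W_k}$. Writing $E_k=\mathrm{gcd}(V_k,W_k)$, $N_k=V_k/E_k$ and $D_k=2W_k/E_k$, the element $g^k$ of $\mathrm{J}_0$ corresponds in $\mathrm{PGL}(2,\mathbb{C}(y))$ to $\left(\begin{smallmatrix}N_k & FD_k\\ D_k & N_k\end{smallmatrix}\right)$, whose entries have greatest common divisor $1$; hence $\deg(g^k)=1+\max(\deg N_k,\deg F+\deg D_k)$, and using $\deg N_k=\deg V_k-\deg E_k$ and $\deg D_k=\deg W_k-\deg E_k$,
\[
\deg(g^k)=1-\deg E_k+\max\big(\deg V_k,\ \deg F+\deg W_k\big).
\]
It thus remains to control the asymptotics of $\deg E_k$, $\deg V_k$ and $\deg W_k$ as $k\to+\infty$.

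First I would show $\deg E_k=\lfloor k/2\rfloor\,d_{\Omega_f}$. The identity $V_k^2-4FW_k^2=4(uv)^k=4(P^2-F)^k$ shows that every irreducible factor $\pi$ of $E_k$ divides $P^2-F$; then the recursion $W_{k+1}=2PW_k-(P^2-F)W_{k-1}$ (with $W_0=0$, $W_1=1$) gives $W_k\equiv(2P)^{k-1}\pmod{\pi}$, forcing $\pi\mid P$ and hence $\pi\mid\mathrm{gcd}(P,F)=\Omega_f$; conversely any $\pi\mid\Omega_f$ divides both $V_k$ and $W_k$. For the multiplicities I would pass to the completion at such a $\pi$: the hypothesis $\mathrm{gcd}(S_f,\Omega_f)=1$ forces $v_\pi(F)=v_\pi(\Omega_f)=:b\geq 1$ and $v_\pi(P)\geq b$, so $v_\pi(\sqrt F)=b/2<v_\pi(P)$, whence $u$ and $v$ have leading terms $\sqrt F$ and $-\sqrt F$ at $\pi$. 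A binomial expansion of $u^k\pm v^k$ then yields $v_\pi(V_k)=kb/2\leq v_\pi(W_k)$ for $k$ even and $v_\pi(W_k)=(k-1)b/2\leq v_\pi(V_k)$ for $k$ odd; summing $v_\pi(E_k)\deg\pi$ over $\pi\mid\Omega_f$ gives $\deg E_k=\lfloor k/2\rfloor\sum_{\pi\mid\Omega_f}v_\pi(\Omega_f)\deg\pi=\lfloor k/2\rfloor\,d_{\Omega_f}$.

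Next I would run the same leading-term analysis at the place at infinity, according to the sign of $d_F-2d_P$. If $d_F<2d_P$ (equivalently $d_{S_f}<d_{\Omega_f}+2d_{P_f}$), then $u$ and $v$ share the leading term of $P$, so $\deg V_k=kd_P$, while $u^k-v^k=2kP^{k-1}\sqrt F(1+\cdots)$ gives $\deg W_k=(k-1)d_P$; hence $\deg(g^k)=kd_P-\lfloor k/2\rfloor d_{\Omega_f}+O(1)$ and $\mu(g)=2d_P-d_{\Omega_f}=d_{\Omega_f}+2d_{P_f}$. If $d_F>2d_P$ (equivalently $d_{S_f}>d_{\Omega_f}+2d_{P_f}$), then $u\sim\sqrt F$ and $v\sim-\sqrt F$, so (say for $k$ even) $\deg V_k=kd_F/2$ and $\deg W_k=(k-2)d_F/2+d_P$, whence $\deg F+\deg W_k=kd_F/2+d_P$ dominates, $\deg(g^k)=kd_F/2-\lfloor k/2\rfloor d_{\Omega_f}+O(1)$, and $\mu(g)=d_F-d_{\Omega_f}=d_{S_f}$. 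Since $d_F=d_{\Omega_f}+d_{S_f}$ and $d_P=d_{\Omega_f}+d_{P_f}$, these two regimes are exactly the two alternatives of the statement; in the boundary case $d_F=2d_P$ one has $\deg v<d_P$ when the leading coefficients of $P^2$ and $F$ coincide (so that $\deg V_k=kd_P$ and $\deg W_k=(k-1)d_P$ directly), and otherwise one pins down the slope by comparing $\deg(N_k^2-FD_k^2)=2kd_P-2\deg E_k$ with $\max(2\deg N_k,\ 2d_P+2\deg D_k)$; either way $\mu(g)=d_{\Omega_f}+2d_{P_f}=d_{S_f}$, consistently with both formulas.

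The main obstacle is the valuation bookkeeping of the last two paragraphs: keeping track of the parity of $k$, of the (possibly ramified) behaviour of $\sqrt F$ both at the irreducible factors of $\Omega_f$ and at infinity, and making systematic use of $\mathrm{gcd}(S_f,\Omega_f)=1$ in order to pin down $v_\pi(F)=v_\pi(\Omega_f)$. None of this is deep, but it is where all the computation sits; the remaining ingredients — reduction to $M_g^k$, its diagonalisation, and the identification $\Upsilon_k=\frac{V_k}{2W_k}$ — are either recalled above or immediate.
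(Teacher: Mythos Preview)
Your argument is correct and follows the same overall strategy as the paper: diagonalise $M_g$ over $\mathbb{C}(y)(\sqrt{F})$, read off $\Upsilon_k$, and extract $\mu(g)=2\lim_k\deg(g^k)/k$. The bookkeeping, however, is organised rather differently. The paper expands $\Upsilon_{2\ell}$ and $\Upsilon_{2\ell+1}$ as explicit sums in $\Omega_f,P_f,S_f$, pulls out the visible power of $\Omega_f$, and then reads the degree of numerator and denominator term by term. You instead package the numerator and denominator as the Lucas-type sequences $V_k,W_k$, use the identity $V_k^2-4FW_k^2=4(P^2-F)^k$ together with the recursion for $W_k$ to locate the primes dividing $E_k=\gcd(V_k,W_k)$, and then compute the $\pi$-adic and infinite valuations by a leading-term argument in the completion. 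This is a genuinely cleaner way to justify that the reduced fraction has been reached (a point the paper leaves implicit), and it treats the two parities uniformly; on the other hand the paper's explicit formulae make the exact value of $\deg(g^k)$ visible rather than only its slope.

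One harmless imprecision: your formula $\deg(g^k)=1+\max(\deg N_k,\deg F+\deg D_k)$ is not literally the degree on $\mathbb{P}^2$ (for instance already at $k=1$, with $P=y$, $F=y^3$, the paper's $\max(d_P+1,d_F,2)=3$ while your expression gives $4$); the paper's bookkeeping effectively uses $\max(\deg N_k+1,\,\deg F+\deg D_k,\,\deg D_k+2)$. Since the discrepancy is $O(1)$, it has no effect on $\mu(g)$, and your asymptotic conclusions $\mu(g)=2d_P-d_{\Omega_f}=d_{\Omega_f}+2d_{P_f}$ (when $d_F<2d_P$) and $\mu(g)=d_F-d_{\Omega_f}=d_{S_f}$ (when $d_F>2d_P$) are correct and agree with the paper.
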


\begin{proof}
\begin{enumerate}
\item Assume $k$ even, write $k$ as $2\ell$.
A straightforward computation yields to 
\[
\Upsilon_{2\ell}=\frac{\displaystyle\sum_{j=0}^{\ell}\binom{2\ell}{2j}\Omega_f^{\ell-j}P_f^{2(\ell-j)}S_f^j}{P_f\displaystyle\sum_{j=0}^{\ell-1}\binom{2\ell}{2j+1}\Omega_f^{\ell-1-j}P_f^{2(\ell-1-j)}S_f^j}
\]

Recall that
$\mathrm{gcd}(\Omega_f,S_f)=1$ by assumption and $\mathrm{gcd}(\Omega_f,P_f)=1$
by construction.
On the one hand 
\[
\deg(N_{2\ell})=\left\{
\begin{array}{ll}
\ell(d_{\Omega_f}+2d_{P_f})\quad\text{if $d_{S_f}\leq d_{\Omega_f}+2d_{P_f}$}\\
\ell d_{S_f}\quad\text{otherwise}\\
\end{array}
\right.
\]
On the other hand
\[
\deg(D_{2\ell})=\left\{
\begin{array}{ll}
d_{P_f}+\left(\ell-1\right)(d_{\Omega_f}+2d_{P_f})\quad\text{if $d_{S_f}\leq d_{\Omega_f}+2d_{P_f}$}\\
d_{P_f}+\left(\ell-1\right)d_{S_f}\quad\text{otherwise}\\
\end{array}
\right.
\]
Finally
\begin{small}
\[
\deg(g^{2\ell})=\left\{
\begin{array}{ll}
\max\Big(\ell(d_{\Omega_f}+2d_{P_f})+1,d_{S_f}+\ell d_{\Omega_f}+({2\ell}-1)d_{P_f},\left(\ell-1\right)d_{\Omega_f}+({2\ell}-1)d_{P_f}+2\Big)\quad\text{if $d_{S_f}\leq d_{\Omega_f}+2d_{P_f}$}\\
\max\Big(\ell d_{S_f}+1,d_{\Omega_f}+d_{P_f}+\ell d_{S_f},d_{P_f}+\left(\ell-1\right)d_{S_f}+2\Big)\quad\text{otherwise}\\
\end{array}
\right.
\]
\end{small}

\item Suppose $k$ odd, write $k$ as $2\ell+1$.
A straightforward computation yields to 
\[
\Upsilon_{2\ell+1}=P_f\Omega_f\,\,\frac{\displaystyle\sum_{j=0}^{\ell}\binom{2\ell+1}{2j}\Omega_f^{\ell-j}P_f^{2(\ell-j)}S_f^j}{\displaystyle\sum_{j=0}^{\ell}\binom{2\ell+1}{2j+1}\Omega_f^{\ell-j}P_f^{2(\ell-j)}S_f^j}
\]

Let us recall that
$\mathrm{gcd}(\Omega_f,S_f)=1$ by assumption and $\mathrm{gcd}(\Omega_f,P_f)=1$
by construction.

On the one hand
\[
\deg(N_{2\ell+1})=\left\{
\begin{array}{ll}
\ell(d_{\Omega_f}+2d_{P_f})+d_{\Omega_f}+d_{P_f}\quad\text{if $d_{S_f}\leq d_{\Omega_f}+2d_{P_f}$}\\
\ell d_{S_f}+d_{P_f}+d_{\Omega_f}\quad\text{otherwise}\\
\end{array}
\right.
\]
On the other hand
\[
\deg(D_{2\ell+1})=\left\{
\begin{array}{ll}
\ell (d_{\Omega_f}+2d_{P_f})\quad\text{if $d_{S_f}\leq d_{\Omega_f}+2d_{P_f}$}\\
\ell d_{S_f}\quad\text{otherwise}\\
\end{array}
\right.
\]
Finally 
\begin{small}
\[
\deg(g^{2\ell+1})=\left\{
\begin{array}{ll}
\max\Big((\ell+1) d_{\Omega_f}+(2\ell+1) d_{P_f}+1,(\ell+1)d_{\Omega_f}+2\ell d_{P_f}+d_{S_f},\ell (d_{\Omega_f}+2d_{P_f})+2\Big)\quad\text{if $d_{S_f}\leq d_{\Omega_f}+2d_{P_f}$}\\
\max\Big(\ell d_{S_f}+d_{P_f}+d_{\Omega_f}+1,(\ell+1) d_{S_f}+d_{\Omega_f},\ell d_{S_f}+2\Big)\quad\text{otherwise}\\
\end{array}
\right.
\]
\end{small}
\end{enumerate}

We conclude with the equality $\mu(g)=2\displaystyle\lim_{k\to +\infty}\frac{\deg(g^k)}{k}$.
\end{proof}

\begin{lem}\label{lem:tec2}
Let $\Omega_f=\mathrm{gcd}(P,F)$ and write $P$ $($resp. $F)$ as $\Omega_f P_f$ 
$($resp. $\Omega_f S_f)$. Suppose that 
$S_f=\Omega_f^p T_f$ with $p\geq 1$ and $\mathrm{gcd}(T_f,\Omega_f)=1$.
Then 
\begin{itemize}
\item[$\diamond$] if $d_{S_f}\leq d_{\Omega_f}+2d_{P_f}$,
then $\mu(g)=2d_{P_f}$;

\item[$\diamond$] otherwise $\mu(g)=d_{S_f}-d_{\Omega_f}$.
\end{itemize}

\end{lem}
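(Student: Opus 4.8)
The plan is to run the scheme of Lemma~\ref{lem:tec1} once more. We keep the conjugate $g=\left(\frac{P(y)x+F(y)}{x+P(y)},y\right)$ of $f$, with $P=\Omega_fP_f$ and $F=\Omega_fS_f$, we read off $\deg(g^k)$ from the lowest-terms expression $\Upsilon_k=N_k/D_k$ (the $(1,1)$-entry of $M_g^k$ over its $(2,1)$-entry), and we conclude with $\mu(f)=\mu(g)=2\lim_{k\to+\infty}\deg(g^k)/k$. The only new ingredient is the hypothesis $S_f=\Omega_f^{p}T_f$; the plan is to substitute it into the formulas for $\Upsilon_{2\ell}$ and $\Upsilon_{2\ell+1}$ obtained in the proof of Lemma~\ref{lem:tec1}.

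First I record the relevant divisibilities: $\mathrm{gcd}(\Omega_f,P_f)=1$ by construction, $\mathrm{gcd}(\Omega_f,T_f)=1$ by hypothesis, and $\mathrm{gcd}(P_f,S_f)=1$, whence $\mathrm{gcd}(P_f,\Omega_f^{p}T_f)=1$; so $\Omega_f,P_f,T_f$ are pairwise coprime. Writing $G:=\Omega_f^{p-1}T_f$ (so $S_f=\Omega_fG$, $F=\Omega_f^2G$, $\deg G=d_{S_f}-d_{\Omega_f}$, and $\mathrm{gcd}(P_f,G)=1$), the substitution $S_f^{j}=\Omega_f^{j}G^{j}$ lets me pull the common powers of $\Omega_f$ — together with one extra factor $P_f$, placed in the denominator when $k$ is even and in the numerator when $k$ is odd — out of the two binomial sums, leaving
\[
\Upsilon_{2\ell}=\frac{\Omega_f\,\widehat N}{P_f\,\widehat D},\qquad
\Upsilon_{2\ell+1}=\frac{\Omega_fP_f\,\widehat N'}{\widehat D'},
\]
where $\widehat N=\sum_{j}\binom{2\ell}{2j}P_f^{2(\ell-j)}G^{j}$, $\widehat D=\sum_{j}\binom{2\ell}{2j+1}P_f^{2(\ell-1-j)}G^{j}$, and $\widehat N',\widehat D'$ are the analogues with $2\ell$ replaced by $2\ell+1$ and $P_f$-exponent $2(\ell-j)$ throughout.

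The crux — the one point that genuinely differs from Lemma~\ref{lem:tec1}, where $\mathrm{gcd}(\Omega_f,S_f)=1$ prevents it — is that these fractions need not yet be reduced. Here I would prove, by evaluating at a root $\xi$ and invoking $\mathrm{gcd}(P_f,G)=1$: (i) $\mathrm{gcd}(\widehat N,\widehat D)=1$ and $\mathrm{gcd}(\widehat N',\widehat D')=1$ — a common root would force both $(P_f(\xi)+s)^{N}+(P_f(\xi)-s)^{N}$ and $(P_f(\xi)+s)^{N}-(P_f(\xi)-s)^{N}$ to vanish (with $N=2\ell$ resp. $N=2\ell+1$ and $s^2=G(\xi)$), hence $P_f(\xi)=s=0$, impossible (the boundary cases $sP_f(\xi)=0$ being excluded by $\mathrm{gcd}(P_f,G)=1$ as well); and (ii) $\mathrm{gcd}(\widehat N,P_f)=\mathrm{gcd}(\widehat D,P_f)=1$, likewise for the primed ones. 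It follows that every prime occurring in the $\mathrm{gcd}$ of numerator and denominator of either fraction divides $\Omega_f$, and by (i) its multiplicity there is at most its multiplicity in $\Omega_f$; so the surplus cancellation is a divisor of $\Omega_f$, of degree at most $d_{\Omega_f}$, independent of $\ell$. Therefore passing to lowest terms lowers $\deg N_k$ and $\deg D_k$ by the same bounded amount, and the expression for $\deg(g^{k})$ used in the proof of Lemma~\ref{lem:tec1}, namely $\max\!\bigl(\deg N_k+1,\ \deg F+\deg D_k,\ \deg D_k+2\bigr)$ with $\deg F=d_{\Omega_f}+d_{S_f}$ bounded, gives $\deg(g^{2\ell})=\deg\widehat N+O(1)$ and $\deg(g^{2\ell+1})=\deg\widehat N'+O(1)$ (the last two terms being $\le\deg\widehat N+O(1)$ since $\deg\widehat D\le\deg\widehat N$ and likewise for the primed pair, while $\deg N_k+1$ already exceeds $\deg\widehat N$).

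It remains to compute $\deg\widehat N$ and take the limit. The degree of the $j$-th term of $\widehat N$ is $2\ell d_{P_f}+j\,c$ with $c:=d_G-2d_{P_f}=d_{S_f}-d_{\Omega_f}-2d_{P_f}$, affine in $j$; hence (for every $\ell$, or at any rate for all but finitely many when $c=0$ and leading terms might cancel, which does not affect the limit) $\deg\widehat N=2\ell d_{P_f}$ if $c\le0$ and $\deg\widehat N=\ell\,d_G=\ell(d_{S_f}-d_{\Omega_f})$ if $c\ge0$; the same holds for $\widehat N'$. Since $\mu(g)=2\lim_k\deg(g^k)/k$ and this limit exists ($g$ being a Jonqui\`eres twist), evaluating along $k=2\ell$ yields $\mu(g)=2d_{P_f}$ when $d_{S_f}\le d_{\Omega_f}+2d_{P_f}$ and $\mu(g)=d_{S_f}-d_{\Omega_f}$ otherwise; the odd values of $k$ give the same, consistently, and the two regimes match at $c=0$, where $2d_{P_f}=d_{S_f}-d_{\Omega_f}$. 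Because $\mu$ is a conjugacy invariant, $\mu(f)=\mu(g)$, and the lemma follows. The main obstacle throughout is (i)--(ii): pinning down exactly how much of $\Omega_f$ survives the reduction is precisely where the extra hypothesis $S_f=\Omega_f^{p}T_f$ is used.
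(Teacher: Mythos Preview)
Your argument is correct and follows essentially the same route as the paper: substitute $S_f=\Omega_f^{p}T_f$ into the expressions for $\Upsilon_{2\ell}$ and $\Upsilon_{2\ell+1}$, factor out the common power of $\Omega_f$, read off the degrees, and pass to the limit $\mu(g)=2\lim_k\deg(g^k)/k$. The paper does the explicit case split (even/odd $k$, and $d_{S_f}\le d_{\Omega_f}+2d_{P_f}$ versus $>$) and records $\deg N_k$, $\deg D_k$, $\deg(g^k)$ in each case, whereas you absorb the bounded contributions into an $O(1)$ and evaluate the limit along even $k$; this is a presentational difference, not a mathematical one.

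Your additions (i)--(ii) actually tighten the paper's argument: the paper writes down the simplified numerator and denominator and treats them as $N_k$, $D_k$ without checking they are coprime, while you show explicitly that any residual common factor must divide $\Omega_f$ and hence has bounded degree. One small imprecision: in the borderline case $c=0$ the leading-coefficient sum $\tfrac12\bigl[(a+\sqrt b)^{2\ell}+(a-\sqrt b)^{2\ell}\bigr]$ can vanish for infinitely many $\ell$ (whenever $(a+\sqrt b)/(a-\sqrt b)$ is a root of unity of suitable order), not only finitely many; your ``does not affect the limit'' is still correct, since the limit exists and may be computed along the complementary subsequence.
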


\begin{proof}
\begin{enumerate}
\item Assume $k$ even, write $k$ as $2\ell$. We get
\begin{small}
\[
\Upsilon_{2\ell}=\frac{\displaystyle\sum_{j=0}^{\ell}\binom{2\ell}{2j}\Omega_f^{\ell-j}P_f^{2(\ell-j)}S_f^j}{P_f\displaystyle\sum_{j=0}^{\ell-1}\binom{2\ell}{2j+1}\Omega_f^{\ell-1-j}P_f^{2(\ell-1-j)}S_f^j}=\frac{\Omega_f\displaystyle\sum_{j=0}^{\ell}\binom{2\ell}{2j}\Omega_f^{(p-1)j}P_f^{2(\ell-j)}T_f^j}{P_f\displaystyle\sum_{j=0}^{\ell-1}\binom{2\ell}{2j+1}\Omega_f^{(p-1)j}P_f^{2(\ell-1-j)}T_f^j}
\]
\end{small}

Recall that
$\mathrm{gcd}(\Omega_f,T_f)=1$ and that $d_{S_f}=pd_{\Omega_f}+d_{T_f}$, {\it 
i.e.} $d_{T_f}=d_{S_f}-pd_{\Omega_f}$.
On the one hand 
\[
\deg(N_{2\ell})=\left\{
\begin{array}{ll}
2\ell d_{P_f}+d_{\Omega_f}\quad\text{if $d_{S_f}\leq d_{\Omega_f}+2d_{P_f}$}\\
\ell d_{S_f}+(1-\ell)d_{\Omega_f}\quad\text{otherwise}\\
\end{array}
\right.
\]
On the other hand
\[
\deg(D_{2\ell})=\left\{
\begin{array}{ll}
(2\ell-1)d_{P_f}\quad\text{if $d_{S_f}\leq d_{\Omega_f}+2d_{P_f}$}\\
(\ell-1)(d_{S_f}-d_{\Omega_f})+d_{P_f}\quad\text{otherwise}\\
\end{array}
\right.
\]
Finally 
\begin{small}
\[
\deg(g^{2\ell})=\left\{
\begin{array}{ll}
\max\Big(2\ell d_{P_f}+d_{\Omega_f}+1,(2\ell-1)d_{P_f}+d_{\Omega_f}+d_{S_f},(2\ell-1)d_{P_f}+1\Big)\quad\text{if $d_{S_f}\leq d_{\Omega_f}+2d_{P_f}$}\\
\max\Big(\ell d_{S_f}-(\ell-1)d_{\Omega_f}+1,\ell d_{S_f}+(2-\ell)d_{\Omega_f}+d_{P_f},(\ell-1)(d_{S_f}-d_{\Omega_f})+d_{P_f}+1\Big)\quad\text{otherwise}\\
\end{array}
\right.
\]
\end{small}

\item Suppose $k$ odd, write $k$ as $2\ell+1$.
We get
\begin{small}
\[
\Upsilon_{2\ell+1}=\frac{P_f\Omega_f\displaystyle\sum_{j=0}^{\ell}\binom{2\ell+1}{2j}\Omega_f^{\ell-j}P_f^{2(\ell-j)}S_f^j}{\displaystyle\sum_{j=0}^{\ell}\binom{2\ell+1}{2j+1}\Omega_f^{\ell-j}P_f^{2(\ell-j)}S_f^j}=\frac{P_f\Omega_f\displaystyle\sum_{j=0}^{\ell}\binom{2\ell+1}{2j}\Omega_f^{(p-1)j}P_f^{2(\ell-j)}T_f^j}{\displaystyle\sum_{j=0}^{\ell}\binom{2\ell+1}{2j+1}\Omega_f^{(p-1)j}P_f^{2(\ell-j)}T_f^j}
\]
\end{small}

On the one hand
\[
\deg(N_{2\ell+1})=\left\{
\begin{array}{ll}
(2\ell+1)d_{P_f}+d_{\Omega_f}\quad\text{if $d_{S_f}\leq d_{\Omega_f}+2d_{P_f}$}\\
\ell d_{S_f}-(\ell-1)d_{\Omega_f}+d_{P_f}\quad\text{otherwise}\\
\end{array}
\right.
\]
On the other hand
\[
\deg(D_{2\ell+1})=\left\{
\begin{array}{ll}
2\ell d_{P_f}\quad\text{if $d_{S_f}\leq d_{\Omega_f}+2d_{P_f}$}\\
\ell d_{S_f}-\ell d_{\Omega_f}\quad\text{otherwise}\\
\end{array}
\right.
\]
Finally 
\begin{small}
\[
\deg(g^{2\ell+1})=\left\{
\begin{array}{ll}
\max\Big((2\ell+1)d_{P_f}+d_{\Omega_f}+1,2\ell d_{P_f}+d_{\Omega_f}+d_{S_f},2\ell d_{P_f}+1\Big)\quad\text{if $d_{S_f}\leq d_{\Omega_f}+2d_{P_f}$}\\
\max\Big(\ell d_{S_f}-(\ell-1)d_{\Omega_f}+d_{P_f}+1,(\ell+1) d_{S_f}-(\ell-1) d_{\Omega_f},\ell d_{S_f}-\ell d_{\Omega_f}+1\Big)\quad\text{otherwise}\\
\end{array}
\right.
\]
\end{small}
\end{enumerate}

We conclude with the equality $\mu(g)=2\displaystyle\lim_{k\to +\infty}\frac{\deg(g^k)}{k}$.
\end{proof}

\begin{lem}\label{lem:tec2}
Let $\Omega_f=\mathrm{gcd}(P,F)$ and write $P$ $($resp. $F)$ as $\Omega_f P_f$ 
$($resp. $\Omega_f S_f)$. Suppose that 
$\Omega_f=S_f^p T_f$ with $p\geq 1$ and $\mathrm{gcd}(T_f,S_f)=1$.
Then  
\[
\mu(g)=2d_{P_f}+d_{\Omega_f}-d_{S_f}.
\]
\end{lem}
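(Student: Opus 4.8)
The plan is to follow the pattern of the two previous lemmas: substitute the factorisation $\Omega_f=S_f^pT_f$ into the explicit formulas for $\Upsilon_k$, isolate the leading terms of its numerator $N_k$ and of its denominator $D_k$, deduce $\deg(g^k)$, and conclude with $\mu(g)=2\displaystyle\lim_{k\to+\infty}\frac{\deg(g^k)}{k}$.

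First I split according to the parity of $k$ and start from the same expressions as in the proofs above, for instance
\[
\Upsilon_{2\ell}=\frac{\displaystyle\sum_{j=0}^{\ell}\binom{2\ell}{2j}\Omega_f^{\ell-j}P_f^{2(\ell-j)}S_f^j}{P_f\displaystyle\sum_{j=0}^{\ell-1}\binom{2\ell}{2j+1}\Omega_f^{\ell-1-j}P_f^{2(\ell-1-j)}S_f^j}
\]
together with its odd-index analogue. Substituting $\Omega_f=S_f^pT_f$, the $j$-th summand of the numerator carries the factor $S_f^{p(\ell-j)+j}$ and that of the denominator the factor $S_f^{p(\ell-1-j)+j}$; since $p\geq1$ both exponents are smallest at the top index, so extracting the common powers of $S_f$ leaves $\Upsilon_{2\ell}=\dfrac{S_f\,\mathcal{A}_\ell}{P_f\,\mathcal{B}_\ell}$ with $\mathcal{A}_\ell=\sum_{j=0}^{\ell}\binom{2\ell}{2j}S_f^{(p-1)(\ell-j)}T_f^{\ell-j}P_f^{2(\ell-j)}$ and $\mathcal{B}_\ell=\sum_{j=0}^{\ell-1}\binom{2\ell}{2j+1}S_f^{(p-1)(\ell-1-j)}T_f^{\ell-1-j}P_f^{2(\ell-1-j)}$. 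The $j$-th term of $\mathcal{A}_\ell$ has degree $(\ell-j)\big((p-1)d_{S_f}+d_{T_f}+2d_{P_f}\big)=(\ell-j)c$, where $c:=2d_{P_f}+d_{\Omega_f}-d_{S_f}$. Since $S_f\mid\Omega_f$ we have $d_{S_f}\leq d_{\Omega_f}+2d_{P_f}$, so only the first of the two regimes appearing in the previous lemmas occurs; this is what makes the conclusion a single formula. Moreover $c\geq1$: if $c=0$ then $d_{P_f}=0$ and $d_{\Omega_f}=d_{S_f}$, which forces $F$ to be a constant multiple of $P^2$, hence $\mathrm{BB}(g)$ constant, contradicting Proposition \ref{pro:bb}. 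Therefore the $j=0$ term of $\mathcal{A}_\ell$ strictly dominates, so $\deg\mathcal{A}_\ell=\ell c$ and $\deg\mathcal{B}_\ell=(\ell-1)c$; a similar computation for $\Upsilon_{2\ell+1}$ yields a numerator of degree $\ell c+d_{P_f}+d_{\Omega_f}$ over a denominator of degree $\ell c$.

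Next I check that these degrees survive reduction to lowest terms. Here the coprimality hypotheses enter: $\mathrm{gcd}(T_f,\Omega_f)=1$ and $\mathrm{gcd}(P_f,\Omega_f)=1$ give $\mathcal{A}_\ell\equiv1$ and $\mathcal{B}_\ell\equiv2\ell$ modulo $T_f$, so $\mathcal{A}_\ell$ and $\mathcal{B}_\ell$ are coprime to $P_fT_f$; when $p\geq2$ the same congruences hold modulo $S_f$, so no cancellation occurs and $\deg N_{2\ell}=d_{S_f}+\ell c$, $\deg D_{2\ell}=d_{P_f}+(\ell-1)c$ exactly. When $p=1$ only the $S_f$-part needs care: with $u=T_fP_f^2$ the identity $\mathcal{A}_\ell^2-u\,\mathcal{B}_\ell^2=(1-u)^{2\ell}$ forces $\mathrm{gcd}(\mathcal{A}_\ell,\mathcal{B}_\ell)=1$ (a common root would satisfy $u=1$, whence $\mathcal{A}_\ell=2^{2\ell-1}\neq0$), and any factor surviving in $\Upsilon_{2\ell}$ must then divide $S_f$, so it has degree bounded independently of $\ell$. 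In every case $\deg N_k$ and $\deg D_k$ agree with the values above up to an error bounded uniformly in $k$.

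Finally, feeding $\deg N_k$, $\deg D_k$ and $d_F=d_{\Omega_f}+d_{S_f}$ into the same expression for $\deg(g^k)$ used in the previous lemmas shows that $\deg(g^{2\ell})$ and $\deg(g^{2\ell+1})$ both equal $\ell c$ up to a term bounded uniformly in $\ell$, whence
\[
\mu(g)=2\displaystyle\lim_{k\to+\infty}\frac{\deg(g^k)}{k}=c=2d_{P_f}+d_{\Omega_f}-d_{S_f}.
\]
The delicate step is the third paragraph: controlling the cancellation in $\Upsilon_k$, i.e. showing that when the fraction is written in lowest terms at most a factor of degree bounded in $k$ disappears. Once that is settled, the degree bookkeeping and the passage to the limit are entirely parallel to the proofs of Lemma \ref{lem:tec1} and of the preceding lemma.
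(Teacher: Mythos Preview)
Your approach mirrors the paper's: substitute $\Omega_f=S_f^{p}T_f$, extract the common $S_f$-powers from numerator and denominator of $\Upsilon_k$, read off $\deg N_k$ and $\deg D_k$, and pass to the limit. You are in fact more careful than the paper, which simply records the degrees of the displayed numerator and denominator and never discusses whether further cancellation could lower $\deg(g^k)$; your observation that $c\geq 1$ (else $F$ is a scalar multiple of $P^2$ and $\mathrm{BB}(g)$ is constant) is also a point the paper leaves implicit.

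One small repair is needed. For $p\geq 2$ your congruence argument shows that $\mathcal{A}_\ell$ and $\mathcal{B}_\ell$ are each coprime to $S_f$, $T_f$ and $P_f$, but it does not yet bound $\gcd(\mathcal{A}_\ell,\mathcal{B}_\ell)$, so the sentence ``no cancellation occurs'' is not fully justified as written. The fix is already in your hands: after the reindexing $i=\ell-j$ one has, for \emph{every} $p\geq 1$,
\[
\mathcal{A}_\ell=\sum_{i=0}^{\ell}\binom{2\ell}{2i}v^{\,i},\qquad
\mathcal{B}_\ell=\sum_{i=0}^{\ell-1}\binom{2\ell}{2i+1}v^{\,i},\qquad
v:=S_f^{\,p-1}T_fP_f^{\,2},
\]
so the identity $\mathcal{A}_\ell^{\,2}-v\,\mathcal{B}_\ell^{\,2}=(1-v)^{2\ell}$ and your root argument (a common zero would force $v=1$, whence $\mathcal{A}_\ell=2^{2\ell-1}\neq 0$) give $\gcd(\mathcal{A}_\ell,\mathcal{B}_\ell)=1$ uniformly in $p$. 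With this, the only possible cancellation in $\Upsilon_{2\ell}=S_f\mathcal{A}_\ell/(P_f\mathcal{B}_\ell)$ is a divisor of $S_f$, of degree at most $d_{S_f}$, and your bounded-cancellation claim holds for every $p\geq 1$; the rest of the argument then goes through exactly as you wrote.
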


\begin{proof}
\begin{enumerate}
\item Assume $k$ even, write $k$ as $2\ell$. We obtain
\begin{small}
\[
\Upsilon_{2\ell}=\frac{\displaystyle\sum_{j=0}^{\ell}\binom{2\ell}{2j}\Omega_f^{\ell-j}P_f^{2(\ell-j)}S_f^j}{P_f\displaystyle\sum_{j=0}^{\ell-1}\binom{2\ell}{2j+1}\Omega_f^{\ell-1-j}P_f^{2(\ell-1-j)}S_f^j}=\frac{S_f\displaystyle\sum_{j=0}^{\ell}\binom{2\ell}{2j}S_f^{j(p-1)}P_f^{2j}T_f^j}{P_f\displaystyle\sum_{j=0}^{\ell-1}\binom{2\ell}{2j+1}S_f^{j(p-1)}P_f^{2j}T_f^j}
\]
\end{small}

Recall that
$\mathrm{gcd}(S_f,T_f)=1$; one has
\[
\deg(N_{2\ell})=(p\ell-\ell+1)d_{S_f}+2\ell d_{P_f}+\ell d_{T_f}
\]
and
\[
\deg(D_{2\ell})=(\ell-1)(p-1)d_{S_f}+(2\ell-1)d_{P_f}+(\ell-1)d_{T_f}
\]
Finally 
\begin{align*}
&\deg(g^{2\ell})=
\max\Big((p\ell-\ell+1)d_{S_f}+2\ell d_{P_f}+\ell d_{T_f}+1,\\
& \hspace{4cm}(\ell(p-1)+2)d_{S_f}+(2\ell-1)d_{P_f}+\ell d_{T_f},\\
& \hspace{4cm}(\ell-1)(p-1)d_{S_f}+(2\ell-1)d_{P_f}+(\ell-1)d_{T_f}+2\Big)
\end{align*}

\item Suppose $k$ odd, write $k$ as $2\ell+1$.
We get
\begin{small}
\[
\Upsilon_{2\ell+1}=\frac{\displaystyle\sum_{j=0}^{\ell}\binom{2\ell+1}{2j}P^{2\ell+1-2j}F^j}{\displaystyle\sum_{j=0}^{\ell}\binom{2\ell+1}{2j+1}P^{2\ell-2j}F^j}= \frac{S_f^pT_fP_f\displaystyle\sum_{j=0}^{\ell}\binom{2\ell+1}{2(\ell-j)}S_f^{j(p-1)} T_f^jP_f^{2j}}{\displaystyle\sum_{j=0}^{\ell}\binom{2\ell+1}{2j}S_f^{j(p-1)} T_f^jP_f^{2j}}.
\]
\end{small}

On the one hand
\[
\deg(N_{2\ell+1})=(p+\ell(p-1))d_{S_f}+(\ell+1) d_{T_f}+(2\ell+1) d_{P_f},
\]
and on the other hand
\[
\deg(D_{2\ell+1})=2\ell d_{P_f}+\ell d_{T_f}+\ell(p-1)d_{S_f}.
\]
Finally 
\begin{align*}
&  \deg(g^{2\ell+1})=\max\Big((p+\ell(p-1))d_{S_f}+(\ell+1) d_{T_f}+(2\ell+1) d_{P_f}+1,\\
& \hspace{4cm}
(p+1+\ell(p-1))d_{S_f}+2\ell d_{P_f}+(\ell+1)d_{T_f},\\
& \hspace{4cm}
2\ell d_{P_f}+\ell d_{T_f}+\ell(p-1)d_{S_f}+2\Big)
\end{align*}
\end{enumerate}

We conclude with the equality $\mu(g)=2\displaystyle\lim_{k\to +\infty}\frac{\deg(g^k)}{k}$.
\end{proof}
\end{enumerate}

\section{Examples}

In this section we will give examples that illustrate Theorem \ref{thm:main}; 
more precisely \S \ref {subsec:firstexample} 
(resp. \S \ref{subsec:secondexample}) 
illustrates Theorem \ref{thm:main}.1. (resp. Theorem \ref{thm:main}.2.)

\subsection{Example that illustrates Theorem \ref{thm:main}.1.}\label{subsec:firstexample}

\subsubsection{First example}

Consider the birational map of $\mathrm{J}$ given 
in the affine chart $x=1$ by $f=\big(y,(1-y)yz\big)$. 
The matrix associated to $f$ is
\[
M_f=\left(
\begin{array}{cc}
(1-y)y & 0\\
0 & 1
\end{array}
\right),
\]
and the Baum Bott index $\mathrm{BB}(f)$ of 
$f$ is $\frac{\big((1-y)y+1\big)^2}{(1-y)y}$; 
in particular $f$ belongs to $\mathcal{J}$ 
(Proposition \ref{pro:bb}). The characteristic
polynomial of $M_f$ is 
\[
\chi_f(X)=\big(X-(1-y)y\big)(X-1).
\]
According to Theorem \ref{thm:main}.1. 
one has $\mu(f)=4\leq 2\max\big(\deg(2),\deg(2(1-y)y\big)=4$.

\smallskip

We can see it another way: \cite{CerveauDeserti} 
asserts that $\deg(f^k)=k\deg(f)-k+1=3k-k+1=2k+1$. 
Consequently 
$\mu(f)=2\displaystyle\lim_{k\to +\infty}\frac{\deg(f^k)}{k}=2\displaystyle\lim_{k\to +\infty}\frac{2k+1}{k}=4$.

\smallskip

A third way to see this is to look at the configuration
of the exceptional divisors. For any $k\geq 1$ one has $f^k=\big(x^{2k+1}:x^{2k}y:(x-y)^ky^kz\big)$. The configuration
of the exceptional divisors of $f^k$ is 

\bigskip
\begin{center}
\begin{figure}[H]
\setlength{\unitlength}{0.8cm}
\begin{small}
\begin{picture}(8,0.8)
\put(0,0){\circle*{0.15}}
\put(-0.1,-0.5){$\mathrm{E}_{2k}$}
\put(0,0){\line(1,0){1}}
\put(1,0){\circle*{0.15}}
\put(0.9,0.3){$\mathrm{E}_{2k-1}$}
\put(1,0){\line(1,0){0.2}}
\put(1.4,0){\line(1,0){0.2}}
\put(1.8,0){\line(1,0){0.2}}
\put(2.2,0){\line(1,0){0.2}}
\put(2.6,0){\line(1,0){0.2}}
\put(3,0){\circle*{0.15}}
\put(2.9,-0.5){$\mathrm{E}_3$}
\put(3,0){\line(1,0){1}}
\put(4,0){\circle*{0.15}}
\put(3.9,0.3){$\mathrm{E}_2$}
\put(4,0){\line(1,0){1}}
\put(5,0){\circle*{0.15}}
\put(4.9,-0.5){$\mathrm{E}_1$}
\end{picture}
\end{small}
\end{figure}
\end{center}

\begin{center}
\begin{figure}[H]
\setlength{\unitlength}{0.8cm}
\begin{small}
\begin{picture}(12,0.8)
\put(0,0){\circle*{0.15}}
\put(-0.1,-0.5){$\mathrm{F}_2$}
\put(0,0){\line(1,0){1}}
\put(1,0){\circle*{0.15}}
\put(0.9,0.3){$\mathrm{F}_3$}
\put(1,0){\line(1,0){0.2}}
\put(1.4,0){\line(1,0){0.2}}
\put(1.8,0){\line(1,0){0.2}}
\put(2.2,0){\line(1,0){0.2}}
\put(2.6,0){\line(1,0){0.2}}
\put(3,0){\circle*{0.15}}
\put(2.9,-0.5){$\mathrm{F}_{k-1}$}
\put(3,0){\line(1,0){1}}
\put(4,0){\circle*{0.15}}
\put(3.9,0.3){$\mathrm{F}_k$}
\put(4,0){\line(1,0){1}}
\put(5,0){\circle*{0.15}}
\put(4.9,-0.5){$\mathrm{F}_{k+1}$}
\put(5,0){\line(1,0){0.6}}
\put(6,0){\oval(0.8,0.2)}
\put(5.9,0.3){$\mathrm{F}_1$}
\put(6.4,0){\line(1,0){0.6}}
\put(7,0){\circle*{0.15}}
\put(6.9,-0.5){$\mathrm{F}_{2k+1}$}
\put(7,0){\line(1,0){1}}
\put(8,0){\circle*{0.15}}
\put(7.9,0.3){$\mathrm{F}_{2k}$}
\put(8,0){\line(1,0){1}}
\put(9,0){\circle*{0.15}}
\put(8.9,-0.5){$\mathrm{F}_{2k-1}$}
\put(9,0){\line(1,0){0.2}}
\put(9.4,0){\line(1,0){0.2}}
\put(9.8,0){\line(1,0){0.2}}
\put(10.2,0){\line(1,0){0.2}}
\put(10.6,0){\line(1,0){0.2}}
\put(11,0){\circle*{0.15}}
\put(10.9,0.3){$\mathrm{F}_{k+2}$}
\end{picture}
\end{small}
\end{figure}
\end{center}

where
\begin{itemize}
\item[$\diamond$] two curves are related by an edge if their intersection
is positive;

\item[$\diamond$] the self-intersections correspond to the shape of the
vertices;

\item[$\diamond$] the point means self-intersection $-1$, the rectangle
means self-intersection $-2k$.
\end{itemize}

In particular the number of base-points of $f^k$ is $2k+2k+1=4k+1$ and
\[
\mu(f)=\displaystyle\lim_{k\to +\infty}\frac{\#\mathfrak{b}(f^k)}{k}=4.
\]

\subsubsection{Second example}

Consider the birational map of $\mathrm{J}$ given 
in the affine chart $z=1$ by $f=\big(x,xy+x(x-1)\big)$. 
The matrix associated to $f$ is
\[
M_f=\left(
\begin{array}{cc}
x & x(x-1)\\
0 & 1
\end{array}
\right);
\]
according to Proposition \ref{pro:bb} the map $f$ is a Jonqui\`eres 
twist (indeed 
$\mathrm{BB}(f)=\frac{(1+x)^2}{x}\in\mathbb{C}(x)\smallsetminus\mathbb{C}$). 
The characteristic polynomial of $M_f$ is $\chi_f(X) =\big(X-x\big)(X-1)$.
and $f$ is conjugate to $g=(x,xy)$.
According to Theorem \ref{thm:main}.1. 
one has 
\[
\mu(f)=\mu(g)=2(\deg(g)-1)=2\leq 2\max\big(\deg(2),\deg(2(1-y)y\big)=2.
\] 

\smallskip

We can see it another way: for any $k\geq 1$ one has
$f^k=\big(x,x^ky+x^{k+1}-x)\big)$ 
and thus $\deg(f^k)=k+1$. As a result 
$\mu(f)=2\displaystyle\lim_{k\to +\infty}\frac{\deg(f^k)}{k}=2\times 1=2$.

\newpage

\subsection{Examples that illustrate Theorem 
\ref{thm:main}.2.}\label{subsec:secondexample}

\subsubsection{First example}

Consider the map of $\mathrm{J}$ given 
in the affine chart $y=1$ by 
\[
f=\left(x,\frac{x(1-xz)}{z}\right).
\]
The matrix associated to $f$ is
\[
M_f=\left(
\begin{array}{cc}
-x^2 & x\\
 1 & 0 
\end{array}
\right),
\]
the Baum Bott index $\mathrm{BB}(f)$ of 
$f$ is $-x^3$ and $f$ belongs to $\mathcal{J}$
(Proposition \ref{pro:bb}). 

Theorem \ref{thm:main}.2.a. asserts that $\mu(f)=3$. We can see it another way: a computation gives $\deg(f^{2k})=3k+1$ and 
$\deg(f^{2k+1})=3(k+1)$ for any $k\geq 0$.
Since 
$\mu(f)=2\displaystyle\lim_{k\to +\infty}\frac{\deg(f^k)}{k}$
one gets $\mu(f)=3$.

\subsubsection{Second example}

Consider the map $f$ of $\mathrm{J}$ associated to 
the matrix
\[
M_f=\left(
\begin{array}{cc}
y & 2y^8\\
y & 1
\end{array}
\right).
\]
The Baum Bott index $\mathrm{BB}(f)$ of $f$ is 
$\frac{(y+1)^2}{y(1-2y^8)}$ and $f$ belongs to $\mathcal{J}$ 
(Proposition \ref{pro:bb}).
Theorem \ref{thm:main}.2.a. asserts that $\mu(f)=9$. 
We can see it another way: a computation gives $\deg(f^{2k})=9k+1$ and 
$\deg(f^{2k+1})=9k+8$ for any $k\geq 0$.
Since 
$2\displaystyle\lim_{k\to +\infty}\frac{\deg(f^k)}{k}=\mu(f)$
one gets $\mu(f)=9$.

\subsubsection{Third example}

Let us consider the Jonqui\`eres map 
of $\mathbb{P}^2_\mathbb{C}$ given in the affine 
chart $z=1$ by 
\[
f=\left(\frac{y(y+2)x+y^5}{x+y(y+2)},y\right).
\]
The matrix associated to $f$ is 
\[
M_f=\left(
\begin{array}{cc}
y(y+2) & y^5\\
1 & y(y+2)
\end{array}
\right)
\]
and the Baum Bott index $\mathrm{BB}(f)$ of
$f$ is $\frac{4(y+2)^2}{(y+2)^2-y^5}$. 
In particular $f$ is a 
Jonqui\`eres twist (Proposition \ref{pro:bb}).

According to Theorem \ref{thm:main}.2.b. 
one has $\mu(f)=3$.
An other way to see that is to compute $\deg f^k$ for
any $k$: for any $\ell\geq 1$ one has
\begin{align*}
&\deg(f^{2\ell})=3(\ell+1),  &&\deg(f^{2\ell+1})=3\ell+5.
\end{align*}
Then we find again $\mu(f)=2\displaystyle\lim_{k\to +\infty}\frac{\deg(f^k)}{k}=3$.

\subsubsection{Fourth example}

Consider the map $f$ of $\mathrm{J}$ associated to 
the matrix
\[
M_f=\left(
\begin{array}{cc}
y(y+2)^8 & y^5\\
1 & y(y+2)^8
\end{array}
\right).
\]
The Baum Bott index $\mathrm{BB}(f)$ of $f$ is 
$\frac{4(y+2)^{16}}{(y+2)^{16}-y^3}$ and $f$ belongs to $\mathcal{J}$ 
(Proposition \ref{pro:bb}).
According to Theorem \ref{thm:main}.2.b. 
one has $\mu(f)=16$.
An other way to see that is to compute $\deg f^k$ for
any $k$: for any $k\geq 1$ one has $\deg f^k=8k+2$. 
Then we find again 
$\mu(f)=2\displaystyle\lim_{k\to +\infty}\frac{\deg(f^k)}{k}=2\times 8=16$.

\subsubsection{Fifth example}

Let us consider the Jonqui\`eres map of 
$\mathbb{P}^2_\mathbb{C}$ given in the affine chart $z=1$ by
\[
f=\left(\frac{y(y+1)(y+2)x+y^2}{(y+2)x+y(y+1)(y+2)},y\right).
\]
The matrix associated to $f$ is 
\[
M_f=\left(
\begin{array}{cc}
y(y+1)(y+2) & y^2 \\
y+2 & y(y+1)(y+2)
\end{array}
\right)
\]
and the Baum-Bott index $\mathrm{BB}(f)$ 
of $f$ is $\frac{4(y+1)^2(y+2)}{(y+1)^2(y+2)-1}$; in particular $f$ is a 
Jonqui\`eres twist (Proposition \ref{pro:bb}).

Theorem \ref{thm:main}.2.c. asserts that $\mu(f)=3$.
An other way to see that is to compute $\deg f^k$ for
any $k$: for any $k\geq 1$
\begin{align*}
& \deg(f^{2k})=3k+2 && \deg(f^{2k+1})=3k+4
\end{align*}
so $2\displaystyle\lim_{k\to +\infty}\frac{\deg(f^k)}{k}=3$
and we find again $\mu(f)=3$.

\subsection{Families} 

\subsubsection{First family}

Let us consider the family $(f_t)_t$ of elements of $\mathrm{J}$ 
given by $f_t=\left(x+t,y\,\frac{x}{x+1}\right)$. 
A straightforward computation yields to
\[
f_t^n=\left(x+nt,y\,\frac{x}{x+1}\,\frac{x+t}{x+t+1}\ldots\,\frac{x+(n-1)t}{x+(n-1)t+1}\right)
\]
The birational map $f_t$ belongs to $\mathcal{J}$
if some multiple of $t$ is equal to $1$, and to 
$\mathrm{J}\smallsetminus\mathcal{J}$ otherwise. Furthermore
\begin{itemize}
\item[$\diamond$] if no multiple of $t$ is equal to $1$, 
then $\mu(f_t)=2$ 
$\Big($because $\displaystyle\lim_{k\to +\infty}\frac{\deg f_t^k}{k}=1\Big)$;

\item[$\diamond$] otherwise $\mu(f_t)=0$.
\end{itemize}

\subsubsection{Second family}

Let us recall a result of 
\cite{CantatDesertiXie}: 
let $f$ be any element of $\mathrm{PGL}_3(\mathbb{C})$, or any  
elliptic element of $\mathrm{Bir}(\mathbb{P}^2_\mathbb{C})$ of 
infinite order; 
then $f$ is a limit of pairwise conjugate loxodromic elements 
$($resp. Jonqui\`eres twists$)$ in the 
Cremona group.
Hence there exist families $(f_n)_n$ of birational
self-maps of the complex projective plane such that
\begin{itemize}
\item[$\diamond$] $\mu(f_n)>0$ for any $n\in\mathbb{N}$; 

\item[$\diamond$] $\mu\big(\displaystyle\lim_{n\to +\infty}f_n\big)=0$.
\end{itemize}

\subsubsection{Third family}

Let us recall a construction given in 
\cite{CantatDesertiXie}.
Consider a pencil of cubic curves with nine distinct base points $p_i$ in $\mathbb{P}^2_\mathbb{C}$. 
Given a point~$m$ in $\mathbb{P}^2_\mathbb{C}$, draw the line $(p_1m)$ and denote by $m'$ the third intersection point 
of this line with the cubic of our pencil that contains $m$: 
the map $m\mapsto \sigma_1(m)=m'$ is a birational involution. 
Replacing $p_1$ by $p_2$, we get a second involution and, for a very general pencil, $\sigma_1\circ \sigma_2$ 
is a Halphen twist that preserves our cubic pencil. 
At the opposite range, consider the degenerate cubic pencil, the members of which are the union of a line through the origin and the circle $C=\{x^2+y^2=z^2\}$. Choose $p_1=(1:0:1)$ and $p_2=(0:1:1)$
as our distinguished base points. Then, $\sigma_1\circ \sigma_2$ is a Jonqui\`eres twist preserving the pencil of lines
through the origin; if the plane is parameterized by $(s, t)\mapsto (st, t)$, this Jonqui\`eres twist is conjugate to 
$(s,t)\mapsto \left( s, \frac{(s-1)t+1}{(s^2+1)t+s-1}\right)$. 
Now, if we consider a family of general cubic pencils converging towards this degenerate pencil, we obtain a sequence 
of Halphen twists converging to a Jonqui\`eres twist. 
So there exists a sequence $(f_n)_n$ of birational self-maps of
$\mathbb{P}^2_\mathbb{C}$ whose limit is also a birational
self-map of $\mathbb{P}^2_\mathbb{C}$ and such that
\begin{itemize}
\item[$\diamond$] $\mu(f_n)=0$ for any $n\in\mathbb{N}$; 

\item[$\diamond$] $\mu\big(\displaystyle\lim_{n\to +\infty}f_n\big)>0$.
\end{itemize}

\vspace*{2cm}

\bibliographystyle{alpha}
\bibliography{biblio}

\nocite{}

\end{document}